\newtheorem{claim}{Claim}
\newtheorem{theorem}{Theorem}
\newtheorem{corollary}{Corollary}
\newtheorem{conjecture}{Conjecture}
\newtheorem{problem}{Problem}
\newcommand{\set}[1]{\ensuremath{\left\{#1 \right\}}}
\newcommand{\chis}[1]{\ensuremath{\chi_{s}'(#1)}}
\newcommand{\chiss}[1]{\ensuremath{\chi_{\mathrm{ss}}'(#1)}}
\newcommand{\F}[1]{\ensuremath{A(#1)}}
\newcommand{\FG}[1]{\ensuremath{A_\sigma(#1)}}
\newcommand{\C}[1]{\ensuremath{C(#1)}}
\newenvironment{proofclaim}[1][]%
    {\noindent \emph{Proof.} {}{#1}{}}{$~$\hfill $~\blacklozenge$ \vspace{0.2cm}}
\definecolor{defblue}{rgb}{0.4,0,0.84}
\definecolor{greyblue}{rgb}{0.23,0.4,0.70}
\definecolor{orange}{rgb}{1.0,0.5,0.2}
\definecolor{violet}{rgb}{0.55,0,0.55}
\g@addto@macro{\UrlBreaks}{\UrlOrds}
\begin{document}

\title{{\bf Revisiting Semistrong Edge-Coloring \\of Graphs}}

\author
{
	Borut Lu\v{z}ar\thanks{Faculty of Information Studies in Novo mesto, Slovenia. 
		E-Mail: \texttt{borut.luzar@gmail.com}},\thanks{University of Ljubljana, Faculty of Mathematics and Physics, Slovenia.} \
	Martina Mockov\v{c}iakov\'{a}\thanks{Faculty of Applied Sciences, University of West Bohemia, Pilsen, Czech Republic.}, \
	Roman Sot\'{a}k\thanks{Faculty of Science, Pavol Jozef \v Saf\'{a}rik University, Ko\v{s}ice, Slovakia.
		E-Mail: \texttt{roman.sotak@upjs.sk}}
}

\maketitle

{
\begin{abstract}
	A matching $M$ in a graph $G$ is {\em semistrong} 
	if every edge of $M$ has an endvertex of degree one in the subgraph induced by the vertices of $M$.
	A {\em semistrong edge-coloring} of a graph $G$ is a proper edge-coloring 
	in which every color class induces a semistrong matching.
	In this paper, we continue investigation of properties of semistrong edge-colorings initiated by Gy\'{a}rf\'{a}s and Hubenko 
	({Semistrong edge coloring of graphs}. \newblock {\em J. Graph Theory}, 49 (2005), 39--47). 
	We establish tight upper bounds for general graphs and for graphs with maximum degree $3$. 
	We also present bounds about semistrong edge-coloring which follow from results regarding other,
	at first sight non-related, problems.
	We conclude the paper with several open problems.
\end{abstract}
}

\medskip
{\noindent\small \textbf{Keywords:} semistrong edge-coloring, semistrong chromatic index, induced matching, strong edge-coloring.}

\section{Introduction}

A {\em proper edge-coloring} of a simple graph $G$ is an assignment of colors to its edges such that adjacent edges receive distinct colors.
A proper edge-coloring can also be seen as a decomposition of the edge set in a set of matchings 
in which every matching represents the edges of one color.
The least integer $k$ for which $G$ admits a proper edge-coloring with $k$ colors
is called the {\em chromatic index} of $G$ and denoted $\chi'(G)$.
On the other hand, an edge-coloring in which edges of the same color comprise an {\em induced matching} (also called a {\em strong matching}),
i.e., a set of edges such that the distance between endvertices of any two edges is at least $2$,
is a {\em strong edge-coloring} of $G$.
The least integer $k$ for which $G$ admits a strong edge-coloring with $k$ colors
is called the {\em strong chromatic index} of $G$ and denoted $\chis{G}$.

While Vizing~\cite{Viz64} proved that the chromatic index of a graph $G$ attains one of just two values, $\Delta(G)$ or $\Delta(G)+1$, 
the interval of possible values for $\chis{G}$ is much wider;
namely, between $\Delta(G)$ and (at least) $\frac{5}{4}\Delta(G)^2$.
Note that the lower bound increases to $2\Delta(G)-1$ if there are two vertices of maximum degree adjacent in $G$.
The complete classification of regular graphs attaining this lower bound was just recently obtained in~\cite{LuzMacSkoSot22}.
The upper bound $\frac{5}{4}\Delta(G)^2$ was conjectured by Erd\H{o}s and Ne\v{s}et\v{r}il in 1985 (see~\cite{Erd88}),
who also provided constructions of graphs attaining this upper bound.
Despite many efforts (see, e.g.,~\cite{BonPerPost22,BruJoo18,MolRee97}), 
the best known upper bound to date is $1.772\Delta(G)^2$ for large enough $\Delta(G)$ due to Hurley et al.~\cite{HurJoaKan21}.

Apart from the general case, determining strong chromatic index for special classes of graphs received
a lot of attention and therefore it is not surprising that relaxed variants of strong edge-colorings appeared.
Roughly, we can divide them into three types: 
\begin{itemize}
	\item[(A)] improper strong edge-colorings, where some edges at distance $1$ or $2$ from an edge can receive the same color (see, e.g.,~\cite{HeLin17});
	\item[(B)] $(1^a,2^b)$-packing edge-colorings, where the set of edges is decomposed into at most $a$ matchings and at most $b$ induced matchings (see, e.g.,~\cite{HocLajLuz22});
	\item[(C)] edge-colorings in which edges of every color induce a matching with particular properties (see, e.g.,~\cite{BasRau18,GodHedHedLas05,GyaHub05}).
\end{itemize}

Although it might not seem so at first sight, the edge-colorings of all three types are very much related.
In this paper, by continuing the work of Gy\'{a}rf\'{a}s and Hubenko initiated in~\cite{GyaHub05},
we consider properties of the semistrong edge-coloring, which can be classified as type~C.
A {\em semistrong edge-coloring} is a proper edge-coloring in which the edges of every color class induce a semistrong matching,
where a matching $M$ of a graph $G$ is {\em semistrong} if every edge of $M$ has an end-vertex of degree one in the induced subgraph $G[V(M)]$.
The least integer $k$ such that $G$ admits a semistrong edge-coloring with at most $k$ colors is 
the {\em semistrong chromatic index} of $G$ and denoted by $\chiss{G}$.

The above definition does not allow existence of a semistrong edge-coloring in any graph with parallel edges, 
since both endvertices of any parallel edge $e$ have degree more than $1$ in the graph induced by $e$.
Therefore, in multigraphs, we only require, for a proper edge-coloring of a multigraph $G$ to be also semistrong,
that the edges of every color class induce a semistrong matching in the underlying graph of $G$, 
i.e., the graph, in which every two vertices adjacent in $G$ are connected by exactly one edge.

Clearly, every strong matching is also semistrong, and so, for every graph $G$, we have
$$
	\nu_{s}(G) \le \nu_{\mathrm{ss}}(G) \le \nu(G)\,,
$$
where by $\nu(G)$, $\nu_{s}(G)$, and $\nu_{\mathrm{ss}}(G)$ 
we denote the maximum size of a matching, strong matching, and semistrong matching of $G$, respectively. 
Consequently,
$$
	\chi'(G) \le \chiss{G} \le \chis{G}\,.
$$
Although the above series of inequalities is trivial, 
no better general upper bound than the bound for the strong edge-coloring has been known for the semistrong edge-coloring of general graphs.
In this paper, we prove the following.
\begin{theorem}
	\label{thm:main}
	For every graph $G$, we have
	$$
		\chiss{G} \le \Delta(G)^2\,.
	$$
\end{theorem}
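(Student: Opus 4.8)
The plan is to run a sequential (greedy) colouring of the edges and to squeeze the leading constant of the trivial strong-colouring bound $2\Delta(G)^2$ down to $1$ by exploiting the one feature that distinguishes a semistrong matching from a strong one: each colour class only has to protect \emph{one} endpoint per edge. It is convenient to reformulate the target condition as follows. A proper colouring is semistrong precisely when one can orient every edge towards a \emph{private} endpoint so that, in each colour class, the head of every edge has no neighbour inside the class other than its own tail. Since each class is a matching, the private endpoints in a fixed class are distinct, and the orientation may be chosen independently in each class; this freedom to decide, per edge, which endpoint is protected is exactly the slack that a strong colouring does not have, and it is the source of the factor-of-two saving we are after.

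Concretely, I would colour the edges one at a time, maintaining the invariant that every already-coloured edge carries a \emph{designated} private endpoint which currently has no neighbour in its colour class besides its partner. When a new edge $e=uv$ is to be coloured, a colour $c$ is admissible if: (i) no edge incident to $u$ or $v$ already has colour $c$ (properness); (ii) at least one of $u,v$ can be made private in class $c$, i.e.\ class $c$ does not already cover a neighbour of $u$ and also a neighbour of $v$; and (iii) adding $e$ to class $c$ does not block the designated private endpoint of any edge already coloured $c$. I then assign $e$ any admissible colour together with a compatible choice of its private side.

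Bounding the number of inadmissible colours is the technical core. Properness rules out at most $2\Delta(G)-2$ colours. For (ii), the point of the flexibility is that a colour is ruled out only when it blocks $u$ \emph{and} $v$ simultaneously, so one pays, in effect, for a single neighbourhood ($\sim\Delta(G)^2$) rather than for $N(u)\cup N(v)$; choosing the protected side adaptively is what keeps this term at one $\Delta(G)^2$ instead of two. The delicate term is (iii): a newly coloured edge can, a priori, destroy the unique private endpoint of many earlier edges, and a crude count of that interaction again costs $\Theta(\Delta(G)^2)$ and merely reproduces the strong bound. To control it I would process the edges in an order dictated by a fixed proper edge-colouring of $G$ (so that the classes I build are matchings and, within the current matching, every vertex is met by a single edge, linearising the local count of private endpoints), and/or orient $G$ with small maximum in-degree so that only few colours are ``private'' at any given vertex.

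The step I expect to be the genuine obstacle is (iii), the backward interaction: ensuring that protecting a single side per edge is globally consistent, so that colouring an edge never leaves a previously coloured edge with both endpoints blocked. Getting this right is what separates the honest bound $\Delta(G)^2$ from the easy $2\Delta(G)^2$, and it is also where the accounting has to be sharp enough to reach the constant exactly rather than $\Delta(G)^2+O(\Delta(G))$. Since the authors announce a matching construction, I would finally check the argument against their tightness example, expecting the extremal graphs to be exactly the configurations in which every colour is forced to block one side of some edge, so that the two semistrong constraints (ii) and (iii) just barely add up to $\Delta(G)^2-1$ forbidden colours.
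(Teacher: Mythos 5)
There is a genuine gap, and you have named it yourself: the backward interaction (iii) is left as an acknowledged obstacle, with only undeveloped suggestions (processing order from a proper edge-coloring, small in-degree orientations) for overcoming it. The deeper problem is that your invariant is too strong to be maintainable by any sequential scheme: you require that coloring a new edge never destroys the designated private endpoint of \emph{any} previously colored edge, i.e.\ zero damage backwards. A single new edge $uv$ can threaten the private endpoints of up to $2\Delta(G)(\Delta(G)-1)$ earlier edges (one for each edge at distance $2$), so condition (iii) alone can forbid on the order of $2\Delta(G)^2$ colors, and no ordering trick obviously repairs this. Even setting (iii) aside, your accounting for (i)+(ii) is not shown to fit the budget: the colors blocked because they cover a neighbor of $u$ \emph{and} a neighbor of $v$ can number about $\Delta(G)(\Delta(G)-1)$, and together with the $2\Delta(G)-2$ properness constraints this is roughly $\Delta(G)^2+\Delta(G)-2$, which exceeds the $\Delta(G)^2-1$ forbidden colors a greedy argument can afford.

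The paper's proof avoids exactly this trap by \emph{not} insisting on zero conflicts during construction. It first takes any proper coloring with $\Delta(G)^2$ colors in which every $4$-cycle is rainbow (greedily possible, since an edge $e$ has at most $k(e)+\ell(e)\le\Delta(G)^2-1$ such constraints, where $\ell(e)$ counts edges in common $4$-cycles with $e$), and then, among all such colorings, picks one minimizing the total number of distance-$2$ conflicts. The key structural observation — which is the idea your plan is missing — is that the semistrong slack is precisely ``at most one distance-$2$ conflict per edge'': if $4$-cycles are rainbow, a single conflict blocks only one endpoint of an edge, so the other endpoint is automatically private, and no bookkeeping of designated endpoints is needed. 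If some edge $e'$ has two or more conflicts, a counting argument finishes the job: every non-forbidden color appearing in $N^2(e')$ must appear there at least twice (otherwise recoloring $e'$ with it decreases the conflict count), so the number of colors occurring in $N^2(e')$ is at most $\tfrac{1}{2}\bigl(|N^2(e')|+k(e')+\ell(e')\bigr)\le\Delta(G)^2-1$, leaving a free color and contradicting minimality. This global extremal argument, together with the halving trick, is what achieves the constant $1$; your per-edge adaptive protection reformulation is a correct restatement of the semistrong condition, but it channels the slack into an invariant that a greedy procedure cannot maintain.
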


Let us note that there are graphs $G$ for which $\chiss{G} = \chis{G}$,
e.g., the complete and the complete bipartite graphs. 
The equality for the two classes is a simple consquence of the fact 
that in a semistrong edge-coloring the edges of every $4$-cycle must be colored with four distinct colors.
Therefore, 
\begin{align*}
	\chiss{K_n} = \chis{K_n} = {n \choose 2}
\end{align*}
and 
\begin{align}
	\label{eq:bip}
	\chiss{K_{m,n}} = \chis{K_{m,n}} = m \cdot n\,.
\end{align}

In~\cite{GyaHub05}, the authors found two additional families of graphs with
the same value of strong and semistrong chromatic indices;
namely the Kneser and the subset graphs.
For two positive integers $n$ and $k$, with $k \le n$, a {\em Kneser graph $K(n,k)$} is a graph 
whose vertex set consists of all $k$-subsets of an $n$-element set, 
and two vertices are connected if and only if the corresponding sets are disjoint.
A {\em subset graph} $S_n(k,\ell)$, 
for $0 \le k \le \ell \le n$, 
is a bipartite graph where the partition vertex sets are the $k$- and $\ell$-subsets of the $n$ element set, 
and two vertices (subsets) are connected if and only if one of them is contained in the other. 
Extending the strong edge-coloring results for the two classes due to~\cite{FauGyaSchTuz90} and~\cite{QuiBen97},
Gy\'{a}rf\'{a}s and Hubenko proved the following.
\begin{theorem}[Gy\'{a}rf\'{a}s \& Hubenko~\cite{GyaHub05}]~
	\begin{itemize}
	\item[$(A)$]
			For every Kneser graph $K(n,k)$ it holds 
			$\chi_{s}'(K(n,k)) =\chi_{ss}'(K(n,k)) = {n \choose 2k}.$
	\item[$(B)$]
			For every subset graph $S_n(k,\ell)$ it holds 
			$\chi_{s}'(S_n(k,\ell)) = \chi_{ss}'(S_n(k,\ell)) = {n \choose \ell-k}.$		
	\end{itemize}
\end{theorem}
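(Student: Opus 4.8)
The plan is to treat both parts uniformly, deriving each upper bound for free and concentrating all the work on the lower bound. Since $\chiss{G}\le\chis{G}$ always holds and the strong chromatic indices $\chis{K(n,k)}=\binom{n}{2k}$ and $\chis{S_n(k,\ell)}=\binom{n}{\ell-k}$ are the values cited above, it suffices to prove the two lower bounds $\chiss{K(n,k)}\ge\binom{n}{2k}$ and $\chiss{S_n(k,\ell)}\ge\binom{n}{\ell-k}$. For these I would use the counting bound: every color class of a semistrong edge-coloring is a semistrong matching, so the classes cover $E(G)$ with at most $\nu_{\mathrm{ss}}(G)$ edges each, giving
\[
  \chiss{G}\;\ge\;\frac{|E(G)|}{\nu_{\mathrm{ss}}(G)}\,.
\]
A direct count yields $|E(K(n,k))|=\tfrac12\binom{n}{2k}\binom{2k}{k}$ and $|E(S_n(k,\ell))|=\binom{n}{\ell-k}\binom{n-\ell+k}{k}$, so everything reduces to the two structural estimates
\[
  \nu_{\mathrm{ss}}(K(n,k))\le\tfrac12\binom{2k}{k}
  \qquad\text{and}\qquad
  \nu_{\mathrm{ss}}(S_n(k,\ell))\le\binom{n-\ell+k}{k}\,.
\]

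These estimates are in fact equalities, the reverse inequalities being witnessed by explicit induced (hence semistrong) matchings: in $K(n,k)$ the $\tfrac12\binom{2k}{k}$ edges that split a fixed $2k$-set into two complementary $k$-sets, and in $S_n(k,\ell)$, after fixing an $(\ell-k)$-set $T$, the edges $(A,A\cup T)$ over all $k$-sets $A\subseteq[n]\setminus T$; in each case distinct edges meet only in vertices that are non-adjacent, so the matching is induced. The driving observation for the upper bounds is the same one that settles $K_n$ and $K_{m,n}$: in any semistrong coloring the four edges of every $4$-cycle get four distinct colors, because if two opposite edges shared a color then on their four common vertices every vertex would have degree at least two in the induced subgraph, so neither edge would have a degree-one endvertex. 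Equivalently, two edges that are opposite edges of some $4$-cycle can never lie in a common semistrong matching.

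To exploit this for an arbitrary semistrong matching $M$ I would orient each edge towards an endvertex of degree one in $G[V(M)]$, thereby marking for every $e=\{p_e,q_e\}\in M$ a \emph{private} vertex $p_e$ adjacent (in $G$) to no matched vertex other than $q_e$. In $K(n,k)$ this says that the $k$-sets $\{p_e\}$ form an intersecting family, while each companion $k$-set $q_e$ is disjoint from $p_e$ yet meets every other $p_f$ and $q_f$; the subset graph yields an analogous containment/disjointness system on the marked vertices. The main obstacle is precisely the resulting extremal set-theoretic problem: to show that these constraints cap $|M|$ at $\tfrac12\binom{2k}{k}$, respectively $\binom{n-\ell+k}{k}$, independently of $n$. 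For $k=2$ this is short, since an intersecting family of pairs is a star or a triangle and the companion condition forces $|M|\le 3=\tfrac12\binom{4}{2}$; the work lies in the general case, where one must control the extra slack that semistrong matchings enjoy over strong ones—using the $4$-cycle fact to exclude the denser configurations a larger family would force—and verify the subset-graph analogue. Should the bound on $\nu_{\mathrm{ss}}$ fail to be tight for some parameters, the fallback is to replace the counting step by exhibiting, again via the $4$-cycle fact, a family of $\binom{n}{2k}$ (resp. $\binom{n}{\ell-k}$) edges that pairwise share a vertex or are opposite on a common $4$-cycle, forcing that many colors directly.
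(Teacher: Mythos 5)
Note first that the paper you are asked to match contains no proof of this statement: it is quoted verbatim from Gy\'{a}rf\'{a}s and Hubenko, with the strong chromatic index values attributed to earlier work of Faudree et al.\ and Quinn--Benjamin. Your reduction scheme is in fact the standard architecture for such results: the upper bounds come for free from $\chiss{G} \le \chis{G}$ together with the known values $\chis{K(n,k)} = \binom{n}{2k}$ and $\chis{S_n(k,\ell)} = \binom{n}{\ell-k}$, your edge counts are correct, and the counting bound $\chiss{G} \ge |E(G)|/\nu_{\mathrm{ss}}(G)$ is valid, so everything does reduce to showing $\nu_{\mathrm{ss}}(K(n,k)) \le \frac{1}{2}\binom{2k}{k}$ and $\nu_{\mathrm{ss}}(S_n(k,\ell)) \le \binom{n-\ell+k}{k}$. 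The problem is that these two extremal inequalities \emph{are} the theorem: the entire content of the Gy\'{a}rf\'{a}s--Hubenko result is precisely that the semistrong matching number collapses to the strong matching number in these graphs, and you explicitly leave this step open (``the main obstacle,'' ``the work lies in the general case''). What you have written is therefore a correct reduction of the statement to itself, verified only for $k=2$ in the Kneser case, not a proof.

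Two further concrete issues. First, your description of the constraint system is too strong: a semistrong matching only requires \emph{one} endvertex of each edge to have degree one in $G[V(M)]$, so while each private set $p_e$ meets every matched set other than $q_e$ (hence the $p_e$'s are intersecting, and each $q_e$ meets every $p_f$ with $f \ne e$), the companions $q_e$ and $q_f$ may well be disjoint, i.e., adjacent. Your claim that $q_e$ ``meets every other $p_f$ and $q_f$'' is false in general, and this slack among the companions is exactly what makes the extremal problem harder than the intersecting-family argument that suffices for strong matchings; any proof attempt built on the stronger (false) constraint would bound $\nu_s$ rather than $\nu_{\mathrm{ss}}$. Second, your fallback --- exhibiting $\binom{n}{2k}$ edges that pairwise conflict (sharing a vertex or opposite on a common $4$-cycle) --- is unsubstantiated and almost certainly unavailable: such a clique in the conflict graph would already certify the strong lower bound, yet the known proofs of $\chis{K(n,k)} \ge \binom{n}{2k}$ proceed by the counting argument, not by a clique, and a vertex of $K(n,k)$ lies on only $\binom{n-k}{k}$ edges, far fewer than $\binom{n}{2k}$ for large $n$, so no star-based construction can come close. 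To complete the proof you would need to supply the extremal set-theoretic lemma bounding families $\set{(p_e,q_e)}$ with the (correct, weaker) cross-intersection constraints --- which is the actual argument of the cited source.
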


Additionally, the above authors considered the equality of the two invariant for the $n$-dimensional cubes
and conjectured that $\nu_{s}(Q_n) = \nu_{\mathrm{ss}}(Q_n) = 2^{n-2}$, for every $n \ge 2$.
As noted by Gregor~\cite{Gre20}, the conjecture was established by Diwan~\cite{Diw19}, who in fact considered the problem of the minimum forcing number,
most likely being unaware of resolving another conjecture.
\begin{theorem}[Diwan~\cite{Diw19}]
	\label{conj:hyper}
	For every integer $n \ge 2$, we have
	$$
		\nu_{s}(Q_n) = \nu_{\mathrm{ss}}(Q_n) = 2^{n-2}\,.
	$$
\end{theorem}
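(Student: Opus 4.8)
The plan is to prove both equalities at once by sandwiching: since $\nu_{s}(Q_n) \le \nu_{\mathrm{ss}}(Q_n)$ holds trivially, it suffices to establish the lower bound $\nu_{s}(Q_n) \ge 2^{n-2}$ together with the upper bound $\nu_{\mathrm{ss}}(Q_n) \le 2^{n-2}$. These two inequalities force $2^{n-2} \le \nu_{s}(Q_n) \le \nu_{\mathrm{ss}}(Q_n) \le 2^{n-2}$, so all three quantities coincide and equal $2^{n-2}$.

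For the lower bound I would exhibit an explicit strong matching. Identify $V(Q_n)$ with $\{0,1\}^n$ and fix the first coordinate as the flip direction, considering the edges $e_x = \{(0,x),(1,x)\}$ for $x \in \{0,1\}^{n-1}$. Restricting to base points $x$ of \emph{even} Hamming weight yields exactly $2^{n-2}$ pairwise disjoint edges. Any two distinct even-weight base points $x \ne y$ satisfy $d(x,y) \ge 2$, and a short check of the three relevant cross-distances $d((0,x),(0,y))$, $d((1,x),(1,y))$, and $d((0,x),(1,y))$ shows each is at least $2$; thus the only pairs of endpoints at distance $1$ are the matching edges themselves. Hence this family is an induced matching, giving $\nu_{s}(Q_n) \ge 2^{n-2}$ and a fortiori $\nu_{\mathrm{ss}}(Q_n) \ge 2^{n-2}$.

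The upper bound $\nu_{\mathrm{ss}}(Q_n) \le 2^{n-2}$ is where the real work lies, and I expect it to be the main obstacle. A first necessary observation is that a semistrong matching contains at most one edge of every $4$-cycle: two opposite edges of a $4$-cycle would give all four of its vertices degree $2$ in the induced subgraph, violating the degree-one requirement. This local constraint is, however, far too weak on its own — double counting incidences between the edges of a semistrong matching $M$ and the $\binom{n}{2}2^{n-2}$ two-dimensional faces only yields $|M| \le n\,2^{n-3}$, and bounding $|M|$ through the fact that each designated degree-one endpoint has $n-1$ neighbours outside $V(M)$ is weaker still. The correct bound must therefore exploit the full recursive structure of $Q_n$.

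The natural attempt is induction on $n$, slicing $Q_n$ into two copies $Q^0,Q^1$ of $Q_{n-1}$ along the last coordinate and writing $M = M_0 \cup M_1 \cup M_c$ with $M_c$ the crossing edges. Because the degree of a vertex in $Q^i[V(M_i)]$ never exceeds its degree in $Q_n[V(M)]$, each $M_i$ is itself semistrong in $Q_{n-1}$, so induction gives $|M_0| + |M_1| \le 2^{n-2}$. The difficulty is that this leaves the crossing edges entirely unaccounted for, and the crux is to charge $M_c$ against a deficit in the two slices, using that the degree-one endpoint of a crossing edge forces all $n-1$ of its in-slice neighbours out of $V(M)$. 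Making this precise seems to require a strengthened inductive invariant, and this is exactly the point where it is most natural to pass to the language of perfect matchings and forcing sets, as Diwan does: a forcing set must meet every alternating $4$-cycle, and controlling how crossing edges interact across the slices is essentially the global lower-bound estimate $f(Q_n,F) \ge 2^{n-2}$ on the forcing number. I would follow that route, treating the transfer between the semistrong and forcing formulations together with the alternating-cycle bound as the main technical hurdle.
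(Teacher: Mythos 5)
Your lower bound is correct and complete: the direction-one edges over even-weight base points form an induced matching of size $2^{n-2}$, and the three distance checks you indicate do go through, so $\nu_{s}(Q_n) \ge 2^{n-2}$. But the entire content of the theorem is the opposite inequality $\nu_{\mathrm{ss}}(Q_n) \le 2^{n-2}$, and your proposal never establishes it. You correctly diagnose that the one-edge-per-$4$-cycle constraint and the naive slicing induction are too weak, but the argument then ends with a statement that you would ``follow Diwan's route''; both essential ingredients of that route --- the reduction from semistrong matchings to forcing sets, and Diwan's bound $f(Q_n,F) \ge 2^{n-2}$ valid for \emph{every} perfect matching $F$ of $Q_n$ --- are left unproven. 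That is exactly the hard part (it was an open conjecture of Gy\'{a}rf\'{a}s and Hubenko until Diwan's work), so what you have is an outline with its core missing. In fairness, the paper itself does not prove this statement either: Theorem~\ref{conj:hyper} is quoted from Diwan via Gregor's observation. So a legitimate ``proof'' here may consist of the reduction plus a citation of Diwan's forcing-number theorem --- but then the reduction must actually be carried out, and yours is not.

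Moreover, the reduction hides a step your sketch does not acknowledge. To compare a semistrong matching $M$ with forcing sets one must first extend $M$ to a perfect matching $F$ of $Q_n$; that every matching of the hypercube extends to a perfect matching is Fink's theorem (Kreweras' conjecture), not a routine fact. Given such an $F$, one then checks that $F \setminus M$ is a forcing set of $F$: an $F$-alternating cycle avoiding $F \setminus M$ would have all of its $F$-edges in $M$, whence every vertex of the cycle lies in $V(M)$ and has degree at least $2$ in $Q_n[V(M)]$, contradicting semistrongness. Note that this requires hitting alternating cycles of \emph{all} lengths, not only alternating $4$-cycles as you state. With that in place, Diwan's theorem gives
$2^{n-2} \le f(Q_n,F) \le |F| - |M| = 2^{n-1} - |M|$,
hence $|M| \le 2^{n-2}$. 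Without this chain, and without either proving or cleanly invoking Diwan's bound, your proposal does not prove the theorem.
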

Furthermore, in~\cite{FauGyaSchTuz90} it was proved that $\nu_{s}(Q_n)=2^{n-2}$ and $\chis{Q_n}=2n$,
from which it can be concluded that the following holds.
\begin{corollary}[Diwan~\cite{Diw19} \& Faudree et al.~\cite{FauGyaSchTuz90}]
	For every integer $n \ge 2$, we have
	$$
		\chiss{Q_n} = \chis{Q_n} = 2n\,.
	$$
\end{corollary}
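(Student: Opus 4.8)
The plan is to sandwich $\chiss{Q_n}$ between $2n$ and $2n$ using only the two ingredients already available: the general chain $\chi'(G) \le \chiss{G} \le \chis{G}$ from the introduction together with $\chis{Q_n} = 2n$ of Faudree et al.~\cite{FauGyaSchTuz90} for the upper bound, and a counting argument based on $\nu_{\mathrm{ss}}(Q_n) = 2^{n-2}$ from Diwan (\Cref{conj:hyper}) for the lower bound.

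For the upper bound I would simply invoke the chain of inequalities, which gives $\chiss{Q_n} \le \chis{Q_n}$, and substitute the known value $\chis{Q_n} = 2n$. This step uses nothing beyond the cited result.

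For the lower bound, the key observation is that $Q_n$ is $n$-regular on $2^n$ vertices, so it has $m = n \cdot 2^{n-1}$ edges. In any semistrong edge-coloring each color class is a semistrong matching and therefore contains at most $\nu_{\mathrm{ss}}(Q_n) = 2^{n-2}$ edges. Since the color classes partition $E(Q_n)$, the number of colors is at least
$$
	\frac{m}{\nu_{\mathrm{ss}}(Q_n)} = \frac{n \cdot 2^{n-1}}{2^{n-2}} = 2n\,.
$$
Combining the two bounds yields $\chiss{Q_n} = 2n = \chis{Q_n}$.

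There is essentially no obstacle here, as both nontrivial facts are quoted; the only thing worth noting is that the counting bound is tight precisely because $2n \cdot 2^{n-2} = n \cdot 2^{n-1}$ equals the edge count. Hence every optimal semistrong edge-coloring decomposes $E(Q_n)$ into exactly $2n$ \emph{maximum} semistrong matchings, and the existence of such a decomposition is in turn witnessed by the optimal strong edge-coloring realizing $\chis{Q_n}=2n$, since every strong matching is semistrong.
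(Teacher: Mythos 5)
Your proof is correct and matches the paper's intended argument: the paper derives the corollary from exactly the same two ingredients (Diwan's $\nu_{\mathrm{ss}}(Q_n)=2^{n-2}$ and Faudree et al.'s $\chis{Q_n}=2n$), with the upper bound coming from $\chiss{Q_n}\le\chis{Q_n}$ and the lower bound from the counting $\chiss{Q_n}\ge |E(Q_n)|/\nu_{\mathrm{ss}}(Q_n) = n\cdot 2^{n-1}/2^{n-2}=2n$. No gaps; your closing remark about the decomposition into maximum semistrong matchings is a correct bonus observation, not needed for the statement.
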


Another graph family for which the semistrong chromatic index is completely determined are trees.
The strong chromatic index of a tree $T$ is at most $2\Delta(T)-1$~\cite{FauGyaSchTuz90}.
In the semistrong setting, the bound is much lower as follows from the result due to He and Lin~\cite{HeLin17}
who considered the {\em $(s,t)$-relaxed strong edge-coloring}, 
i.e., an edge-coloring, in which, for every edge $e$ of a graph $G$, the number of edges adjacent to $e$ having the same color as $e$ is at most $s$, 
and the number of edges at distance $2$ from $e$ having the same color as $e$ is at most $t$. 
The corresponding chromatic index is {\em $(s, t)$-relaxed strong chromatic index}, denoted by $\chi'_{(s, t)}(G)$.
They proved that, for every tree $T$, if $s=0$ and $t = \Delta(T) - 1$, then $\chi'_{(s,t)}(T) \le \Delta(T) + 1$ \cite[Lemma~5.1]{HeLin17}.
For the proof, they provided a construction \cite[Algorithm~2]{HeLin17} of an edge-coloring using at most $\Delta(T) + 1$ colors
that is also a semistrong edge-coloring, and thus they also proved the following.
\begin{corollary}[He \& Lin~\cite{HeLin17}]
	For every tree $T$ it holds
	$$
		\chiss{T} \le \Delta(T) + 1\,.
	$$
	Moreover, if $T$ has diameter at most $4$, then
	$$
		\chiss{T} = \Delta(T)\,.
	$$
\end{corollary}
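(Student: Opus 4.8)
The lower bound is immediate: since every semistrong edge-coloring is in particular proper, $\chiss{T}\ge\chi'(T)$, and as every tree is bipartite, hence Class~1, we have $\chi'(T)=\Delta(T)$; thus $\chiss{T}\ge\Delta(T)$ for every tree $T$, with no assumption on the diameter. It therefore remains to establish the two upper bounds, and my plan is to route both of them through a single combinatorial reformulation of the semistrong condition on trees.

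The first step is to translate the semistrong requirement into a forbidden-subgraph form. Fix a proper edge-coloring, a color $\gamma$, and let $M_\gamma$ be the corresponding color class. An edge $cd\in M_\gamma$ violates the degree-one requirement precisely when each of $c$ and $d$ has, inside $T[V(M_\gamma)]$, a neighbour other than the opposite endpoint; writing these neighbours as $b$ (adjacent to $c$) and $e$ (adjacent to $d$) and their $M_\gamma$-partners as $a$ and $f$, one obtains three edges $ab,cd,ef\in M_\gamma$ together with the edges $bc$ and $de$. Because $T$ is acyclic, the six vertices $a,b,c,d,e,f$ are forced to be distinct and to span a path $a\,b\,c\,d\,e\,f$. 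Hence a proper edge-coloring of a tree is semistrong \emph{if and only if} it contains no path on five edges whose first, third, and fifth edges all receive the same color. I expect this reduction to be the conceptual heart of the argument; everything afterwards rests on it.

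For the bound $\chiss{T}\le\Delta(T)+1$ I would root $T$ at an arbitrary vertex and color its edges greedily in breadth-first order, so that when an edge $vc$ from a vertex $v$ to a child $c$ is to be colored, the edge from $v$ to its parent and every edge from $v$ to a previously colored child is already fixed, while the subtree rooted at $c$ is still blank. With $\Delta(T)+1$ colors available I must verify that the colors forbidden for $vc$ never exhaust the palette: besides the at most $\Delta(T)-1$ colors used on the other edges at $v$ (needed for properness), the reformulation above shows that a monochromatic alternating five-path through $vc$ can only be created through colors already present in the small neighbourhood of the parent of $v$. The hard part will be exactly this counting. The one lever that makes it feasible is that semistrongness asks only that \emph{one} endpoint of each color-$\gamma$ edge be degree-one in $T[V(M_\gamma)]$: when $vc$ is set to play the role of a middle edge whose parent side is already blocked, it suffices to commit to keeping its child side clear and to propagate that single constraint one level downward. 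Organizing this bookkeeping so that at most $\Delta(T)$ colors are ever forbidden is precisely the content of the construction of He and Lin~\cite[Algorithm~2]{HeLin17}, whose output, by the forbidden-subgraph characterization, is semistrong; I would invoke it to close this case.

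Finally, the diameter bound is an immediate consequence of the same characterization. If $\diam{T}\le 4$, then $T$ contains no path on five edges at all, so no monochromatic alternating five-path can occur, and therefore \emph{every} proper edge-coloring of $T$ is already semistrong. Taking an optimal proper edge-coloring, which uses $\chi'(T)=\Delta(T)$ colors, yields $\chiss{T}\le\Delta(T)$; combined with the universal lower bound $\chiss{T}\ge\Delta(T)$ this gives $\chiss{T}=\Delta(T)$, as claimed.
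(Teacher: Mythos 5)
Your proposal is correct, and for the main bound it ends up in exactly the same place as the paper: both reduce $\chiss{T}\le\Delta(T)+1$ to He and Lin's Algorithm~2 from~\cite{HeLin17}, whose output is a proper coloring with at most $\Delta(T)+1$ colors that is moreover semistrong; neither you nor the paper re-verifies that the algorithm avoids the relevant conflicts, so on this point the two arguments are the same citation. Where you genuinely add something is in the two flanking steps. First, your characterization --- a proper edge-coloring of a tree is semistrong if and only if no path on five edges has its first, third and fifth edges in one color class --- is sound: acyclicity forces the six vertices of any violating configuration to be distinct and to span a $P_6$, and conversely such a $P_6$ makes its middle edge violate the degree-one condition at both ends. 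This is precisely the viewpoint the paper uses only implicitly, and much later, in the proof of Theorem~\ref{thm:cubic}, where middle edges of such monochromatic-ends paths are called bad middle edges. Second, your treatment of the diameter-$4$ case is self-contained and more transparent than the paper's, which simply attributes the equality to He and Lin: you observe that $\diam{T}\le 4$ excludes any path on five edges in a tree, so every proper edge-coloring is vacuously semistrong, and K\H{o}nig's theorem (trees are bipartite, hence Class~1) gives $\chiss{T}=\chi'(T)=\Delta(T)$; the same lower bound $\chiss{T}\ge\chi'(T)=\Delta(T)$ holds for all trees. In short: same skeleton for the $\Delta(T)+1$ bound (a citation), but your explicit forbidden-$P_6$ lemma buys a genuinely elementary proof of the diameter statement, which the paper leaves to the reference.
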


In this paper, we also present a tight result for graphs with maximum degree $3$.
From Theorem~\ref{thm:main} it already follows that at most $9$ colors are needed and that the bound is attained by $K_{3,3}$.
We improve this bound as follows.
\begin{theorem}
	\label{thm:cubic}
	For every connected graph $G$ with maximum degree $3$, distinct from $K_{3,3}$, we have
	$$
		\chiss{G} \le 8\,.
	$$
\end{theorem}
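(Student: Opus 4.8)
The plan is to argue by contradiction, taking a counterexample $G$ that is connected, subcubic, different from $K_{3,3}$, satisfies $\chiss{G}\ge 9$, and is edge‑minimal (and, subject to that, vertex‑minimal) among all such graphs. Throughout I would keep in mind the precise local obstructions to a semistrong coloring: a color class $M$ fails to be semistrong exactly when some edge $uv\in M$ has, at each of $u$ and $v$, a neighbour (other than its partner) that is an endpoint of another edge of $M$. In a subcubic graph this can occur in only two ways. Either $uv$ is an opposite edge of a monochromatic $C_4$, or $uv$ is \emph{doubly flanked}, meaning there are two further edges $f,g$ of its color with an endpoint of $f$ adjacent to $u$ and an endpoint of $g$ adjacent to $v$. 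Equivalently, a valid coloring must be proper, must rainbow every $4$-cycle (as already noted in the text), and must avoid these doubly flanked triples.

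First I would clear out low-degree vertices. A vertex of degree $1$ is removed together with its pendant edge; since its leaf end has degree $1$ in any color class, the reinserted edge can never be the bad edge of its class, so only properness and the $C_4$/flanking constraints coming from its single non-leaf endpoint restrict its color, and these forbid only a bounded number of the eight colors. A vertex $v$ of degree $2$ with neighbours $a\ne b$ is suppressed by deleting $v$ and adding the edge $ab$, later splitting $ab$ back into the path $a\,v\,b$; one must dispose by hand of the few degenerate cases in which $ab$ is already present, or in which suppression would create $K_{3,3}$ or disconnect $G$ (cycles and other tiny graphs being checked directly). After these reductions $G$ is cubic. The recurring technical point is that reinserting an edge, or expanding a path, enlarges the vertex set of an existing color class and may therefore retroactively violate the semistrong property of a previously safe edge; so the colors of the reinserted edges must be chosen not merely to be proper but also to avoid creating a doubly flanked edge nearby, and one checks that the constant number of colors this rules out still leaves a free color among the eight.

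It then remains to treat the cubic case. Here I would start from a proper $4$-edge-coloring, which exists by Vizing's theorem, and read it as a partition of $E(G)$ into four matchings. Since assigning two fresh shades to each matching keeps the whole coloring proper automatically, and since whether a class is semistrong depends only on its vertex set, the task reduces to splitting each matching into two semistrong classes. The obstruction to such a split is precisely a monochromatic $C_4$ or a doubly flanked triple inside one matching, and these are controlled by how the $4$-cycles of $G$ overlap. I would analyze this overlap by discharging: give each vertex and each $4$-cycle an initial charge, let dense clusters of $4$-cycles send charge to sparser regions, and show that a configuration admitting a local repair—swapping one edge between its two shades, or, when needed, rerouting along a short alternating path between matchings—must appear unless every vertex sits in the maximally dense $4$-cycle pattern, which forces $G=K_{3,3}$. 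Each reducible configuration is dispatched by an explicit swap that fixes the class without propagating a new violation.

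The hard part is twofold. First, the semistrong condition is \emph{not} local in the usual coloring sense: unlike properness or the $C_4$-rainbow requirement, whether an edge is bad depends on the whole vertex set of its color class, so a single edge can spoil a class far from where one is working, and every extension and every swap must be accompanied by a verification that no new doubly flanked edge is born. Second, the flanking constraint is genuinely ternary—the two end edges of a monochromatic $P_4$ are permitted, so it cannot be phrased as forbidding a color on a \emph{pair} of edges and hence cannot be reduced to ordinary coloring of a conflict graph. Consequently the budget is tight: a single edge of a cubic graph can face up to four properness constraints and up to four $C_4$ constraints simultaneously, exactly exhausting the eight colors, so naive greedy extension cannot work. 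The substance of the proof is therefore the discharging step that manufactures the missing slack by proving that every locally densest configuration other than $K_{3,3}$ itself is reducible.
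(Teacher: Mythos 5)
Your reformulation of the cubic case---split each class of a proper $4$-edge-coloring into two semistrong matchings, so that properness comes for free and only the $C_4$ and ``doubly flanked'' obstructions remain---is a legitimate reduction, but the engine that is supposed to finish the proof, the discharging over $4$-cycles, rests on a false premise and is otherwise entirely unspecified. The difficulty is \emph{not} governed by how the $4$-cycles of $G$ overlap: in the paper's proof, a minimal counterexample is first shown (Claims~\ref{cl:multi}--\ref{cl:4}) to have girth at least $5$, i.e.\ to contain no $4$-cycles at all, and it is precisely in that regime that all the real work happens. The doubly flanked obstruction, which you yourself identify as ternary and non-local, persists at arbitrary girth, so ``dense clusters of $4$-cycles'' cannot carry the argument, and ``the maximally dense pattern forces $K_{3,3}$'' cannot be the endgame. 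What replaces it in the paper is a global device with no counterpart in your sketch: among proper $8$-edge-colorings, choose one minimizing the number of bad middle edges of monochromatic induced $P_6$'s, and subject to that the number of monochromatic distance-$2$ pairs, and then derive a contradiction by a cascade of recolorings along induced paths (Claims~\ref{cl:sub_1n}--\ref{cl:distance}). You give no charges, no discharging rules, no list of reducible configurations, and no proof of the claimed structural dichotomy, so the core of the argument is missing.

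There is also a concrete reason why ``start from a proper $4$-edge-coloring, which exists by Vizing's theorem'' cannot be the starting point: your plan proves the strictly stronger statement that \emph{some} proper $4$-edge-coloring refines into eight semistrong classes, and an arbitrary Vizing coloring need not. Take the Petersen graph and the proper $4$-edge-coloring consisting of a perfect matching $M$ (the five spokes) together with a $3$-edge-coloring of the two disjoint $5$-cycles that remain. One checks that no three spokes form a semistrong matching (in any triple of spokes, some spoke has both of its endvertices adjacent to endvertices of the other two), so $M$ cannot be partitioned into two semistrong matchings at all, since one part would have size at least $3$. Hence your method must not merely re-shade within classes but must steer the $4$-edge-coloring itself away from such classes; the ``rerouting along short alternating paths'' where this would happen is exactly the unproven core. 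Two smaller gaps: your taxonomy of failures omits the triangle case (a single further edge $f=g$ with one endvertex adjacent to both ends of $uv$ already makes $uv$ bad), and the color budget for re-expanding a suppressed $2$-vertex does not close by naive counting---properness at both ends plus the possibility of creating a bad edge at each of the up to five nearby vertices can forbid more than eight colors---so even the reduction to cubic graphs needs a Hall-type or finer argument of the kind the paper uses, not a count of ``a constant number of forbidden colors.''
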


Note that the bound in Theorem~\ref{thm:cubic} is tight, 
since the semistrong chromatic index of the $5$-prism is $8$. 
This follows from the fact that the size of any maximum semistrong matching in the $5$-prism is $2$, 
while it has $15$ edges.
\begin{figure}[htp!]
	$$
		\includegraphics{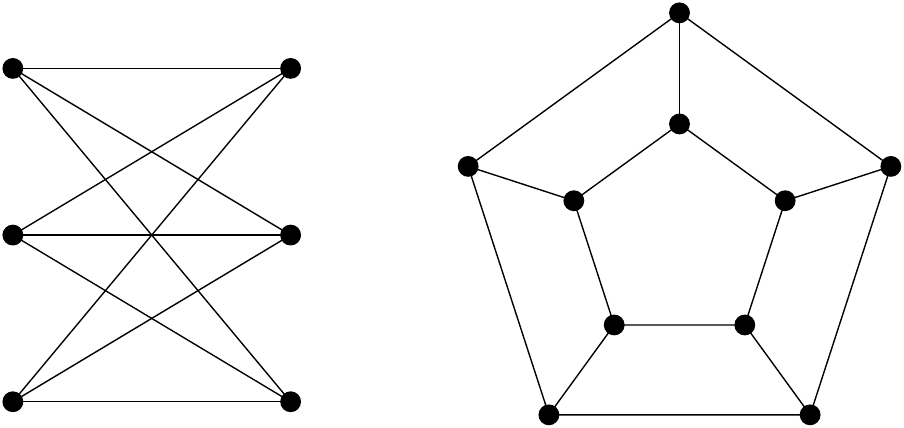}
	$$
	\caption{The semistrong chromatic index of $K_{3,3}$ and the $5$-prism is $9$ and $8$, respectively.}
	\label{fig:prism}
\end{figure}

The structure of the paper is the following.
We begin by presenting notation, definitions and auxiliary results in Section~\ref{sec:prel}.
In Sections~\ref{sec:gen} and~\ref{sec:cubic}, we prove Theorems~\ref{thm:main} and~\ref{thm:cubic}, respectively,
and finally, in Section~\ref{sec:con}, we discuss several additional edge-colorings related to the semistrong edge-coloring
and conclude with some open problems and suggestions for further work.

\section{Preliminaries}
\label{sec:prel}

In this section, we introduce terminology, notation, and some auxiliary result.

When constructing a semistrong edge-coloring with at most $k$ colors, we always assume that the colors are taken 
from the set of the first $k$ positive integers; as customary, we write $[k] = \set{1,\dots,k}$.
We also abuse the notation and denote the set of colors appearing on the edges from a set $X$ 
in a semistrong edge-coloring $\sigma$ as $\sigma(X)$.

Given a partial semistrong edge-coloring $\sigma$, 
a color $\alpha$ is \textit{available} for an edge $e$ if there is no edge at distance at most $2$ from $e$ colored with $\alpha$.
The set of available colors for an edge $e$ is denoted $\FG{e}$, or simply, $\F{e}$. 
A color $\alpha$ is \textit{forbidden} for an edge $e$ if coloring $e$ with $\alpha$ violates the assumptions of the semistrong edge-coloring.

With sets of available colors defined,
in several cases, we will use the following application of Hall's Marriage Theorem~\cite{Hal35}.
\begin{theorem}
	\label{thm:hall}
	Let $G$ be a graph and $\sigma$ a partial semistrong edge-coloring of $G$.
	Let $X = \set{e_1,\dots,e_k}$ be the set of non-colored edges of $G$. Let
	$\mathcal{F} = \set{\F{e_1},\dots,\F{e_k}}$. If for every subset $\mathcal{X} \subseteq \mathcal{F}$ it holds that
	$$
		|\mathcal{X}| \le  \Big | \bigcup_{X \in \mathcal{X}} X \Big |\,,
	$$
	then one can choose an available color for every edge in $X$ such that all the edges receive distinct colors.
\end{theorem}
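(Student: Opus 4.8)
The plan is to recognize the statement as a direct instance of Hall's Marriage Theorem, with the non-colored edges playing the role of the ``left'' vertices and the colors the ``right'' vertices of an auxiliary bipartite graph. First I would build a bipartite graph $H$ on parts $X = \set{e_1,\dots,e_k}$ and $C = \bigcup_{i=1}^k \F{e_i}$, joining $e_i$ to a color $\alpha$ precisely when $\alpha \in \F{e_i}$. Under this correspondence, a choice of a distinct available color for each non-colored edge is exactly a matching of $H$ saturating $X$, equivalently a system of distinct representatives for the family $\mathcal{F}$.

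Next I would check that the hypothesis is precisely Hall's condition for such a matching. For a subset $S \subseteq X$ the neighborhood of $S$ in $H$ is $N_H(S) = \bigcup_{e_i \in S} \F{e_i}$, so the condition $|N_H(S)| \ge |S|$ reads $|S| \le |\bigcup_{e_i \in S} \F{e_i}|$, which is exactly the displayed inequality once the subfamily $\mathcal{X} \subseteq \mathcal{F}$ is identified with $S$. The one point that needs care here is that $\mathcal{F}$ must be read as a family indexed by the edges of $X$ rather than as a plain set: if two distinct edges happen to have the same set of available colors, the corresponding members of $\mathcal{F}$ should still be counted with multiplicity, so that $|\mathcal{X}|$ counts indices and not distinct sets. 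With this bookkeeping in place, Hall's Marriage Theorem yields the desired matching and hence distinct available colors $c_1 \in \F{e_1}, \dots, c_k \in \F{e_k}$.

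Finally, although the statement only asserts the existence of such a distinct-color assignment, I would record why it actually extends $\sigma$ to a genuine semistrong edge-coloring, since this is the form in which the result is applied. Because each $c_i$ is available for $e_i$, no edge at distance at most $2$ from $e_i$ carries $c_i$ in $\sigma$, and because the $c_i$ are pairwise distinct, no two newly colored edges share a color; hence in the extended coloring every color class $C_\alpha$ consists of the old $\alpha$-colored edges, which already induced a semistrong matching, together with at most one new edge $e_i$ lying at distance at least $3$ from all of them. Such an $e_i$ is an isolated edge in $G[V(C_\alpha)]$, so both its endpoints have degree $1$ there while the degrees of the remaining vertices are unchanged, and every color class still induces a semistrong matching.

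I do not expect a serious obstacle: the core of the argument is the translation into a system of distinct representatives, after which the conclusion is immediate from Hall's theorem. The only genuinely delicate points are the multiset reading of $\mathcal{F}$ noted above and, if one wants the stronger conclusion, the verification that availability together with distinctness already guarantees the semistrong property.
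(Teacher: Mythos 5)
Your proposal is correct and takes essentially the same route as the paper: the paper presents this statement as a direct application of Hall's Marriage Theorem, i.e.\ exactly your translation into a bipartite graph of non-colored edges versus colors and a system of distinct representatives for the (index-wise) family $\mathcal{F}$. Your closing verification that availability plus distinctness yields a genuine extension of $\sigma$ to a semistrong edge-coloring is a correct elaboration of what the paper leaves implicit whenever it applies the theorem.
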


By $\C{v}$ we denote the set of colors of the edges incident with $v$. 
Throughout the paper, we sometimes simply write `coloring' instead of `semistrong edge-coloring'.

By $N(uv)$, we denote the set of edges adjacent to $uv$, i.e., the edge-neighborhood of $uv$,
and by $N^2(uv)$, we denote the set of edges at distance $1$ or $2$ from $uv$, i.e., the $2$-edge-neighborhood of $uv$.
Similarly, by $N_u(uv)$, we denote the set of edges adjacent to $uv$ with $u$ being one of their endvertices,
and by $N_u^2(uv)$, we denote the set of edges at distance $1$ or $2$ from $uv$ such that 
every edge $e$ in $N_u^2(uv)$ has $u$ as an endvertex or there is an edge having $u$ as endvertex, connecting $e$ and $uv$.


\section{Proof of Theorem~\ref{thm:main}}
\label{sec:gen}

In this section, we prove a stronger result than Theorem~\ref{thm:main}; 
namely, we establish the following.
\begin{theorem}
	\label{thm:s01}
	For every graph $G$, we have
	$$
		\chi'_{(0,1)}(G) \le \Delta(G)^2\,.
	$$
	Moreover, there is always a $(0,1)$-relaxed strong edge-coloring of $G$ using at most $\Delta(G)^2$ colors
	such that the edges of every color induce a semistrong matching.
\end{theorem}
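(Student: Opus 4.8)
The plan is to prove both assertions at once by producing a single edge-coloring with at most $\Delta^2$ colors that is simultaneously $(0,1)$-relaxed strong and semistrong, where I abbreviate $\Delta := \Delta(G)$. First I would pin down exactly what must be maintained. In a proper coloring whose every color class $C$ is a matching satisfying the $(0,1)$-condition, each edge of $C$ has at most one other edge of $C$ at distance $2$; I would show that such a class is semistrong if and only if each component of $G[V(C)]$ is either a single edge of $C$ or an \emph{induced} path on four vertices whose two end-edges lie in $C$ and whose middle edge does not. Indeed, if $e=uv\in C$ has no degree-one endpoint in $G[V(C)]$, then $u$ and $v$ each send a connector to some $C$-edge; the $(0,1)$-condition forces both connectors to reach a single $C$-edge $f$, and a short case check (the connectors reach the same endpoint of $f$, or its two distinct endpoints) shows that $e$ always ends with two busy endpoints, and in the second case so does $f$. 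The complementary analysis pins the admissible components down to $K_2$ and induced $P_4$'s. This turns the problem into the purely combinatorial task: partition $E(G)$ into at most $\Delta^2$ classes, each a matching whose induced structure consists only of single edges and induced $P_4$'s.

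With the target fixed I would color the edges greedily in a carefully chosen order, at each step giving $e=uv$ a color that keeps its class valid: either a color absent from $N^2(e)$, or a color present through exactly one endpoint of $e$, on a still-unpaired $C$-edge $f$ with which $e$ closes up to an induced $P_4$. To count forbidden colors I split the distance-$2$ neighborhood of $e$ into the parts seen through $u$ and through $v$, of total size at most $2(\Delta-1)^2$, and add the at most $2\Delta-2$ colors barred by properness. The whole point of the relaxation is that a color meeting $N^2(e)$ only on one side, on a still-unpaired edge, is \emph{not} forbidden—it can be absorbed as a $P_4$-partner—so the distance-$2$ contribution should be halved to about $(\Delta-1)^2$, whence $2\Delta-2+(\Delta-1)^2=\Delta^2-1<\Delta^2$ leaves a color free. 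This additive split, $2\Delta-1$ for properness plus $(\Delta-1)^2$ for the halved distance-$2$ defect, is exactly what makes $\Delta^2$ the natural ceiling; and $K_{\Delta,\Delta}$, where every semistrong color class must be a single edge and $\chiss{K_{\Delta,\Delta}}=\Delta^2$, shows it cannot be lowered.

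The hard part is that the halving is not a local phenomenon. A color can be blocked for $e$ not because it crowds $N^2(e)$, but because a single distance-$2$ neighbor $f$ of $e$ has already spent its one allowed partner on an edge far from $e$; such ``partner-exhausted'' obstructions are invisible to any count taken inside $N^2(e)$ and, if uncontrolled, could forbid as many as $2(\Delta-1)^2$ colors and break the bound. Overcoming this is the crux: I would fix the coloring order via a bounded-back-degree (degeneracy-type) ordering so that each edge meets few already-colored distance-$2$ neighbors, and, crucially, pair edges into induced $P_4$'s eagerly as they are colored, so the scarce ``partner'' resource is never wasted and the one-sided distance-$2$ colors really do remain available. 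Throughout I would enforce the induced-$P_4$ requirement (no chords among the four vertices), which ensures that no class ever acquires a monochromatic triangle-with-pendant or a $4$-cycle with monochromatic opposite edges, so the final coloring is genuinely semistrong and not merely $(0,1)$-relaxed. Should the direct greedy ordering prove too fragile to extract the exact factor-$2$ saving, the fallback is to pre-commit to a near-optimal $P_4$-pairing and then invoke Hall's theorem (Theorem~\ref{thm:hall}) to assign colors to the resulting super-edges subject to the conflict constraints.
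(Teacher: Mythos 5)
Your structural reduction is sound: the characterization of admissible color classes (every component of $G[V(C)]$ is a single edge or an \emph{induced} $P_4$ with its end-edges in $C$) is exactly the right target, and indeed it is stated more carefully than the paper's own bookkeeping, which via $L(e)$ only singles out the four-cycle configuration. Your arithmetic $2\Delta-2+(\Delta-1)^2=\Delta^2-1$ is also the same count that powers the paper's bound. But the proof has a genuine gap precisely at the point you yourself flag as the crux, the ``partner-exhausted'' obstructions, and neither of your proposed remedies can close it. A degeneracy-type ordering cannot keep the number of already-colored distance-$2$ neighbors small: for a $\Delta$-regular graph of girth $5$, the conflict graph on $E(G)$ (edges at distance at most $2$) is itself regular of degree $2\Delta-2+2(\Delta-1)^2$, so in \emph{every} ordering the last edge sees its entire $2$-neighborhood already colored, and your worst-case figure of $2(\Delta-1)^2+2\Delta-2>\Delta^2$ forbidden colors is then unavoidable by any count taken inside $N^2(e)$. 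Eager pairing aggravates rather than cures this: once an edge $f$ has been paired with a partner far from $e$, its color is forbidden to every later edge seeing $f$ exactly once at distance $2$, which is exactly the obstruction you must exclude, and nothing in the proposal bounds how often this occurs. The Hall-theorem fallback is only named, not argued; pre-committing to a global $P_4$-pairing is itself the hard part of the problem.

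The paper escapes this trap by abandoning the one-pass greedy entirely, and this is the idea your proposal is missing. It first takes \emph{any} proper coloring with $\Delta(G)^2$ colors in which each edge avoids the colors of $L(e)$ (this is easy, since only $k(e)+\ell(e)\le\Delta(G)^2-1$ constraints are involved, all local), and then, among all such colorings, chooses one minimizing the \emph{global} number $\iota(\sigma)$ of distance-$2$ conflict pairs. If some edge $e'$ still has two or more conflicts, it is recolored with any color appearing at most once on $N^2(e')\setminus(N(e')\cup L(e'))$: such a recoloring destroys at least two conflict pairs and creates at most one, so $\iota(\sigma)$ strictly decreases --- and whether the unique distance-$2$ occurrence of the new color is ``already paired'' is completely irrelevant to this potential argument. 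The counting
$$
	k(e') + \ell(e') + \tfrac{1}{2}\bigl(|N^2(e')| - k(e') - \ell(e')\bigr) \le \Delta(G)^2 - 1
$$
then guarantees that such a color exists. Replacing your ordering-plus-eager-pairing scheme by this global minimization and local exchange step is what would turn your plan into a proof.
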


Clearly, Theorem~\ref{thm:main} is a direct corollary of Theorem~\ref{thm:s01}.
Note however that $(0,1)$-relaxed strong edge-coloring without additional conditions 
is not equivalent to the semistrong edge-coloring, since in the former, e.g., non-rainbow $4$-cycles can appear.

\begin{proof}
	For every edge $e = uv \in E(G)$, 
	let $L(e) = (N_u^2(e) \cap N_v^2(e))\setminus N(e)$ be the set of all edges at distance $2$ from $e$ lying is a common $4$-cycle with $e$.
	Additionally, let $k(e) = |N(e)|$ and $\ell(e) = |L(e)|$.
	Observe that $k(e) \le 2(\Delta(G) - 1)$, $\ell(e) \le (\Delta(G) - 1)^2$ 
	and $|N^2(e)| \le 2(\Delta(G)-1)\Delta(G) - \ell(e)$.

	Let $\sigma(G)$ be a proper edge-coloring of $G$ with at most $\Delta(G)^2$ colors 
	such that every edge $e$ receives a color distinct from all colors of edges in $L(e)$.
	Note that such a coloring always exists, since there are at most 
	$k(e) + \ell(e) \le \Delta(G)^2 - 1$ 
	conflicts for every edge $e$,
	and therefore one can, e.g., simply take a greedy approach to find it.

	We now proceed by a contradiction. 
	Among all possible above described colorings, let $\sigma(G)$ be a coloring with the minimum number of {\em distance-$2$ conflicts},
	i.e., pairs of edges at distance $2$ receiving the same colors.
	Denote the number of distance-$2$ conflicts for the coloring $\sigma(G)$ by $\iota(\sigma)$.
	Observe that if every edge $e$ has at most one distance-$2$ conflict, then $\sigma(G)$ is a semistrong edge-coloring.
	Thus, we may assume that there exists an edge $e'$ having at least two distance-$2$ conflicts.

	There are at most $k(e') + \ell(e') \le \Delta(G)^2 - 1$ colors that cannot be used for $e'$.
	Moreover, if there is some other color $\alpha$ used at most once on the edges from $N^2(e') \setminus (N(e') \cup L(e'))$, 
	then we recolor $e'$ with $\alpha$ and so decrease $\iota(\sigma)$, a contradiction.
	Therefore, the number of colors in $N^2(e')$ is at most
	\begin{align*}
		k(e') + \ell(e') + \frac{1}{2}\Big(|N^2(e')| - k(e') - \ell(e')\Big) &\le \Delta(G)^2 - 1\,.
	\end{align*}
	So, we can recolor the edge $e'$ with an available color and decrease $\iota(\sigma)$, a contradiction.
\end{proof}

Let us note here that the above proof implies that less than $\Delta(G)^2$ colors 
suffice for every graph in which no edge has both endvertices of maximum degree.

\section{Proof of Theorem~\ref{thm:cubic}}
\label{sec:cubic}

In this section, we improve the upper bound obtained in the previous section for the class of graphs with maximum degree $3$.
\begin{proof}[Proof of Theorem~\ref{thm:cubic}]
	We prove the theorem by a contradiction. 
	Suppose that $G$ is the minimal counterexample, i.e., a connected subcubic graph distinct from $K_{3,3}$ with $\chiss{G} = 9$ 
	and minimum number of edges among all such graphs; this means that $G$ has at least $9$ edges. 
	We continue by establishing additional structural properties of $G$.
	
	\begin{claim}
		\label{cl:multi}
		$G$ does not contain parallel edges.
	\end{claim}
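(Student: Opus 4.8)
The plan is to argue by contradiction against the minimality of $G$. Suppose $G$ contains two parallel edges $e_1,e_2$ joining vertices $u$ and $v$. I would delete one of these edges, invoke the minimality of $G$ to semistrong-color the resulting smaller graph with $8$ colors, and then show that the deleted edge can always be put back, thereby producing a semistrong $8$-coloring of $G$ and contradicting $\chiss{G}=9$.

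First I would pin down the multiplicity of $uv$. Since $G$ is subcubic, this multiplicity is at most $3$; if it were $3$, then $u$ and $v$ would spend all three of their incident edges on one another, forcing $G$ to be precisely the triple edge on two vertices, a graph with only $3$ edges, contradicting that $G$ has at least $9$ edges. Hence the multiplicity is exactly $2$, and consequently $u$ has at most one further neighbour $x$ and $v$ has at most one further neighbour $y$. This degree bookkeeping is exactly what makes the removed edge easy to recolor.

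Next I would set $G'=G-e_2$. Deleting a parallel edge keeps the graph connected, since $e_1$ still joins $u$ and $v$, and it stays subcubic with strictly fewer edges; moreover $G'\neq K_{3,3}$, because otherwise $G$ would be $K_{3,3}$ together with an extra parallel edge, which would create a vertex of degree $4$. By minimality of the counterexample, $G'$ therefore satisfies $\chiss{G'}\le 8$; fix such a coloring $\sigma$. It remains to extend $\sigma$ to $e_2$. The decisive observation is that, precisely because $uv$ is a double edge, the set of edges at distance at most $2$ from $e_2$ is small: it comprises $e_1$, the (at most two) edges $ux$ and $vy$, and the at most two edges incident to each of $x$ and $y$, so at most $3+4=7$ edges, and hence at most $7$ colors. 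Thus some color $c\in[8]$ appears on no edge within distance $2$ of $e_2$. Assigning $c$ to $e_2$ keeps the coloring proper, and since no edge of color $c$ lies within distance $2$ of $uv$, in the underlying graph the edge $uv$ becomes an isolated component of the color-$c$ class, so that class remains a semistrong matching. This contradicts $\chiss{G}=9$.

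The routine steps are the degree counting and the verification that $G'$ indeed lies in the class to which minimality applies. The one place needing genuine care, and which I expect to be the main obstacle, is confirming that recoloring with a fresh color truly preserves semistrongness in the underlying graph: one must check not only that $uv$ is incident to no other color-$c$ edge, but also that introducing the vertices $u$ and $v$ into the color class does not raise the induced degree of any previously colored edge, which is exactly what the choice of a color absent from the entire distance-$2$ neighbourhood guarantees.
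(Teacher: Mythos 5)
Your proof is correct and follows essentially the same approach as the paper: delete one of the parallel edges, apply minimality to obtain an $8$-coloring of the smaller graph, and observe that at most $7$ colors are forbidden for the deleted edge (since $|N^2(e_2)| \le 7$), so it can be recolored. Your additional verifications (multiplicity exactly $2$, connectivity of $G'$, $G' \neq K_{3,3}$, and preservation of semistrongness in the underlying graph) are details the paper leaves implicit, but the argument is the same.
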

	
	\begin{proofclaim}
		Suppose to the contrary that there are two edges, $e_1$ and $e_2$, both connecting vertices $u$ and $v$ in $G$. 
		By the minimality of $G$, there exists a semistrong edge-coloring $\sigma$ of $G' = G \setminus \{e_1\}$ with at most $8$ colors.
		Note that $\sigma$ is a partial semistrong edge-coloring of $G$ (with only $e_1$ being non-colored),
		since the distances between the edges in $G'$ are the same as the distances between the edges in $G$.
		Since there are at most $7$ edges in $N^2(e_1)$, there is at least $1$ available color for $e_1$.
		Therefore, we can color $e_1$, and hence extend $\sigma$ to all edges of $G$, a contradiction.
	\end{proofclaim}

	\begin{claim}
		\label{cl:bridge}
		$G$ is $2$-connected.
	\end{claim}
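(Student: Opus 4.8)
The plan is to argue by contradiction using the minimality of $G$, following the same template as \Cref{cl:multi}: suppose $G$ has a cut-vertex (or is disconnected after removing a bridge), locate a smaller subgraph, invoke minimality to color it, and then extend the coloring. Concretely, I would first suppose $G$ is not $2$-connected. If $G$ has a bridge $e=uv$, then removing $e$ splits $G$ into two components $G_u$ and $G_v$ (containing $u$ and $v$ respectively); if instead $G$ merely has a cut-vertex, I would separate $G$ along that vertex into blocks. The natural first reduction is to rule out bridges, then handle cut-vertices, since the bridge case is cleaner: removing a bridge genuinely disconnects the graph, whereas a cut-vertex shared by two $2$-connected blocks keeps more edges near the vertex.

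For the bridge case, let $e=uv$ be a bridge with components $G_u \ni u$ and $G_v \ni v$. The key observation is that in a subcubic graph the bridge $e$ has at most $2$ incident edges at $u$ and at most $2$ at $v$, so $|N(e)| \le 4$, and moreover any edge at distance $2$ from $e$ lies entirely inside $G_u$ or entirely inside $G_v$ (no $4$-cycle can pass through a bridge), so $|N^2(e)|$ is bounded. I would color $G_u \cup \{e\}$ and $G_v$ separately (each is smaller than $G$, and I must check each is either $K_{3,3}$-free or handle that subcase), then permute the color classes on one side so that the colors forbidden for $e$ from the two sides do not collide, or simply recolor $e$ at the end. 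Since $|N^2(e)| \le 7$ in a subcubic graph, at least one color in $[8]$ remains available for $e$ after both sides are colored consistently, giving the contradiction. The delicate point is ensuring both smaller pieces are colorable by the minimality hypothesis — this requires verifying that neither piece is forced to be $K_{3,3}$ (and if a piece equals $K_{3,3}$, treating it directly, since $K_{3,3}$ is $3$-regular and cannot contain a bridge, so this case does not actually arise).

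For a cut-vertex $w$ that is not a bridge endpoint, the graph decomposes into blocks $B_1,\dots,B_t$ meeting only at $w$. Here the subtlety is that edges incident to $w$ in different blocks are at distance $1$ from each other (adjacent at $w$) and edges at distance $2$ through $w$ interact, so the blocks are not coloring-independent near $w$. The plan is to color each block $B_i$ (smaller than $G$) by minimality, then relabel colors block by block so that the at most $3$ edges incident to $w$ receive distinct colors and no distance-$2$ conflict through $w$ is created; since $w$ has degree at most $3$ and we have $8$ colors, a greedy permutation of palettes on each block suffices. I would formalize this by processing blocks one at a time, at each step having to avoid only the colors already committed on edges within distance $2$ of $w$ in previously-colored blocks, a bounded number well below $8$.

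The main obstacle I anticipate is the bookkeeping at a cut-vertex: unlike the bridge case, the local neighborhood of $w$ spans several blocks, so one must verify that the distance-$2$ constraints across blocks (through $w$) can be simultaneously satisfied after independently coloring the blocks. The clean way to sidestep most of this is to reduce everything to the bridge case first — showing $G$ has no bridge — and then observe that a $2$-edge-connected subcubic graph with a cut-vertex has a very restricted structure near $w$ (each block contributing at most one or two edges at $w$), which makes the palette permutation manageable. I expect the proof to close by a counting argument identical in spirit to \Cref{cl:multi}: the number of edges within distance $2$ of the critical edge or vertex is at most $7$ (respectively, the number of constraints at $w$ is well under $8$), leaving a free color and contradicting $\chiss{G}=9$.
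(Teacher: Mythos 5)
Your overall template (delete a bridge, color the two pieces by minimality, permute one palette, then color the bridge) is indeed the paper's approach, but your execution misses the one point where this claim is actually delicate. The union of semistrong colorings of $G_u$ and $G_v$ need \emph{not} be a semistrong partial coloring of $G$: if some color class contains an edge $ux \in E(G_u)$ and an edge $vy \in E(G_v)$, then the bridge $uv$ lies inside the subgraph induced by the vertices of that class, so $u$ and $v$ both get degree at least $2$ there; if the class also contains an edge $x'x''$ with $x'$ adjacent to $x$, then $ux$ has no endvertex of degree one, and semistrongness fails even though each side was semistrong on its own. This cross-bridge interaction is exactly why the paper permutes the colors of $\sigma_v$ so that $\sigma(N_v(uv)) \cap \sigma(N_u(uv)) = \emptyset$. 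Your permutation is aimed only at freeing a color for $e$, and even for that purpose it is stated backwards: to leave a color available for $e$ you want the two forbidden sets to \emph{overlap} as much as possible (small union), not to ``not collide''. Relatedly, your count $|N^2(e)| \le 7$ is false: a bridge of a subcubic graph has up to $4$ adjacent edges and up to $8$ edges at distance $2$, so $|N^2(e)|$ can be $12$ and these edges can carry all $8$ colors, leaving nothing for $e$ without further work. The paper's permutation does double duty: besides the disjointness at distance $1$, it folds the at most $2+4$ colors of the $v$-side into the at most $6$ colors of the $u$-side as far as possible, achieving $|\sigma(N^2(uv))| \le 6$ and hence an available color for $uv$.

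Your second case (a cut-vertex handled via a block decomposition) is vacuous, and the paper dispenses with it in one sentence: in a connected subcubic graph, every cut-vertex $v$ forces a bridge, because the at most $3$ edges at $v$ are distributed among the at least $2$ components of $G - v$, so some component is joined to $v$ by exactly one edge, and that edge is a bridge. Hence ruling out bridges already gives $2$-connectivity; no block-by-block palette argument is needed, and the one you sketch inherits the same unaddressed cross-block semistrong conflicts described above. (A small further point: the reason the pieces are colorable by minimality is not that ``$K_{3,3}$ cannot contain a bridge'' --- the pieces do not contain the bridge --- but that $u$, respectively $v$, has degree at most $2$ in its piece, so neither piece is $3$-regular, in particular neither is $K_{3,3}$.)
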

	
	\begin{proofclaim}
		Suppose to the contrary that there is a cut-vertex $v$ in $G$.
		In the setting of subcubic graphs this also means that there is a bridge $uv$ in $G$.
		Let $G_u$ (resp., $G_v$) be the component of $G\setminus \set{uv}$ containing $u$ (resp., $v$).
		By the minimality, there is a semistrong edge-coloring $\sigma_u$ of $G_u$ with at most $8$ colors, 
		and similarly, there is a semistrong edge-coloring $\sigma_v$ of $G_v$ with at most $8$ colors.
		
		The colorings $\sigma_u$ and $\sigma_v$ induce a partial proper edge-coloring $\sigma$ of $G$ with only $uv$ being non-colored.
		Note that $\sigma$ might not be semistrong, since the colors of the edges incident with $u$ and $v$ may be in conflict.
		Therefore, we permute the colors of $\sigma_v$ in such a way that		
		$\sigma(N_v(uv))\cap\sigma(N_u(uv))=\emptyset$ and $|\sigma(N^2(uv))| \le 6$. 
		Note that this can be done, since there are at most four edges in $N^2_v(uv)\setminus N_v(uv)$. 
		This means that there is an available color for the edge $uv$,
		and hence we can extend $\sigma$ to all the edges of $G$, a contradiction.
	\end{proofclaim}
	
	\begin{claim}
		\label{cl:33}
		There are no adjacent triangles in $G$.
	\end{claim}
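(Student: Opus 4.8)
The plan is to obtain a contradiction by deleting the edge shared by the two triangles, coloring the rest by minimality, and then extending the coloring across that edge. Suppose two triangles of $G$ share an edge $uv$, and let $w$ and $x$ be their respective apices, so that $uvw$ and $uvx$ are triangles with $w \ne x$. Since $G$ is subcubic and $u$ is already adjacent to $v$, $w$, and $x$, we have $\deg(u)=3$, and symmetrically $\deg(v)=3$; hence the only edges incident with $u$ or $v$ are $uv$, $uw$, $ux$, $vw$, $vx$. In particular $w$ and $x$ are each incident with at most one additional edge leaving the configuration. Because $uv$ lies on a triangle it is not a bridge, so $G-uv$ is connected (as also follows from \cref{cl:bridge}); moreover $G-uv$ is subcubic, has fewer edges than $G$, and differs from $K_{3,3}$ since $u$ and $v$ now have degree $2$. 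By the minimality of $G$, the graph $G-uv$ therefore admits a semistrong edge-coloring $\sigma$ with at most $8$ colors.

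Next I would extend $\sigma$ to the edge $uv$. The four edges $uw$, $vw$, $ux$, $vx$ are exactly the edges of the $4$-cycle $u\,w\,v\,x$, and since the edges of every $4$-cycle receive four distinct colors in a semistrong coloring (as noted in the introduction), these four colors are pairwise distinct. Together with the colors of the at most two edges joining $w$ and $x$ to the rest of $G$, this forbids at most $6$ colors for $uv$. As there are $8$ colors in total, at least two colors remain available, and I would assign one of them, say $\gamma$, to $uv$.

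Finally, the point that needs care is checking that the resulting coloring of $G$ is genuinely semistrong, and this is the main obstacle. Because $\gamma$ avoids the colors of all edges incident with $u$, $v$, $w$, $x$ (namely $uw$, $ux$, $vw$, $vx$ and the outgoing edges at $w$ and $x$), no edge of color $\gamma$ other than $uv$ meets any of these four vertices; in particular $w,x \notin V(M_\gamma)$, where $M_\gamma$ is the current $\gamma$-class. Consequently, in the subgraph induced by $V(M_\gamma)\cup\{u,v\}$ the only neighbour of $u$ is $v$ and vice versa, so $uv$ is an isolated edge there, the matching $M_\gamma\cup\{uv\}$ is still semistrong, and no other color class or distance relation is affected. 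This produces a semistrong $8$-coloring of $G$, contradicting $\chiss{G}=9$. The one delicate verification is this last one: coloring the diagonal $uv$ of the $4$-cycle $u\,w\,v\,x$ must not raise the induced degree of any existing class edge, which is precisely what avoiding the colors at $w$ and $x$ guarantees.
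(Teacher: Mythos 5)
Your proof is correct in substance but takes a genuinely different route from the paper's. The paper deletes the two \emph{endvertices} of the shared edge, so that the coloring of the smaller graph is automatically a partial semistrong coloring of $G$ (induced subgraphs are preserved under vertex deletion), and then colors the five missing edges simultaneously via \cref{thm:hall}, using availability counts $6,4,4,4,4$. You instead delete only the shared \emph{edge} $uv$ and extend over that single edge directly, avoiding Hall's theorem altogether. What your route buys is brevity; what it costs is a subtlety that your write-up asserts but does not prove: a semistrong edge-coloring of $G-uv$ need not induce a partial semistrong coloring of $G$, because reinserting $uv$ can create a new edge inside an induced subgraph $G[V(M_\delta)]$ --- exactly when some class $M_\delta$ contains an edge incident with $u$ and an edge incident with $v$. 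Your sentence ``no other color class or distance relation is affected'' is precisely this claim for every class $\delta\neq\gamma$, and it is the crux of why edge deletion is admissible here at all. Fortunately the justification is one line, using a fact you already invoke for the counting: since $uw,wv,vx,xu$ form a $4$-cycle of $G-uv$, they receive four pairwise distinct colors, hence $\{\sigma(uw),\sigma(ux)\}\cap\{\sigma(vw),\sigma(vx)\}=\emptyset$, no color class meets both $u$ and $v$, and therefore $G[V(M_\delta)]=(G-uv)[V(M_\delta)]$ for every $\delta$. With that sentence inserted, your extension step is sound: choosing $\gamma$ outside the at most six colors appearing on edges incident with $u,v,w,x$ guarantees $u,v,w,x\notin V(M_\gamma)$, so $uv$ is an isolated edge of $G[V(M_\gamma)\cup\{u,v\}]$ and every class remains semistrong.

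One further small point: you should first rule out $wx\in E(G)$, as the paper does at the start of its proof. Otherwise $\{u,v,w,x\}$ induces a $K_4$, and since $G$ is connected and subcubic this forces $G\cong K_4$, which is excluded (it has fewer than $9$ edges, and admits a strong edge-coloring with $6$ colors). Your phrase ``the at most two edges joining $w$ and $x$ to the rest of $G$'' silently assumes $w\not\sim x$; if $wx$ were an edge, its color would also have to be forbidden for $\gamma$, though the count $4+1\le 6$ would still go through.
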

	
	\begin{proofclaim}
		Clearly, $G$ is not isomorphic to $K_4$ as it admits a strong edge-coloring with $6$ colors.
		Therefore, we may assume, for a contradiction, that there are two adjacent triangles in $G$,
		$(u,w_1,w_2)$ and $(v,w_1,w_2)$, where $u$ and $v$ are not adjacent.
		Let $u_1$ and $v_1$ be the neighbors of $u$ and $v$, respectively, distinct from $w_1$ and $w_2$.
		Note that $u_1$ and $v_1$ exist and are distinct, since $G$ has at least $9$ edges and is $2$-connected.
		
		Now, consider the graph $G' = G \setminus \set{w_1,w_2}$. 
		Note first that by Claim~\ref{cl:bridge}, $G'$ is connected.
		It is not isomorphic to $K_{3,3}$, and thus admits a semistrong edge-coloring $\sigma$ with at most $8$ colors due to the minimality of $G$.
		The coloring $\sigma$ induces a partial semistrong edge-coloring of $G$, 
		in which only the edges incident with the two adjacent triangles are non-colored.
		Observe that $w_1w_2$ has at least $6$ available colors, while the other four non-colored edges have at least $4$ available colors each.
		This means that by \cref{thm:hall}, we can color them and hence extend $\sigma$ to all the edges of $G$, a contradiction.
		%
%
	\end{proofclaim}	

	\begin{claim}
		\label{cl:34}
		There is no $4$-cycle adjacent to a triangle in $G$.
	\end{claim}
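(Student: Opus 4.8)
The plan is to argue by contradiction, reusing the deletion-and-extension strategy of the previous claims. Suppose $G$ contains a triangle and a $4$-cycle sharing an edge; denote the shared edge by $uv$, the triangle by $uvw$, and the $4$-cycle by $uvab$, so that $a$ is adjacent to $v$ and $b$, while $b$ is adjacent to $u$ and $a$. Since each of $u$ and $v$ is incident with three of the listed edges, both have degree exactly $3$, and I record the possible third neighbors of $w$, $a$, and $b$ as $w'$, $a'$, and $b'$. The five vertices $u,v,w,a,b$ are pairwise distinct, and by \cref{cl:33} no edge of the triangle lies in a second triangle; these facts control how the peripheral vertices may coincide or be adjacent.

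I would then delete a small vertex set to obtain a smaller graph $G'$, color it by minimality, and extend with \cref{thm:hall}. The natural first move is to delete the triangle tip $w$. Then $G'=G-w$ is connected (one vertex is removed from a $2$-connected graph, by \cref{cl:bridge}) and is not $K_{3,3}$, since $u$ and $v$ now have degree $2$; hence $G'$ admits a semistrong $8$-coloring $\sigma$ by minimality. If $\deg(w)=2$, only $uw$ and $vw$ are uncolored; a direct count shows each lies within distance $2$ of at most five colored edges, so each has at least three available colors, and \cref{thm:hall} (or a greedy choice) completes the coloring, a contradiction.

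The hard part is the case $\deg(w)=3$, where $ww'$ also becomes uncolored and may meet up to eight colors among the edges within distance $2$ (its forbidden set collects both edges at $w'$ together with $uv,ub,va$ and the edges at the further neighbors of $w'$), so $\F{ww'}$ can be empty and deleting $w$ alone need not suffice. Here I would instead delete the pair $\{u,v\}$: this keeps $w$ attached through $w'$, leaves exactly the five edges $uv,uw,vw,ub,va$ uncolored, and produces a graph with low-degree vertices $w$ (degree $1$) and $a,b$ (degree at most $2$), so none of its components is $K_{3,3}$. If $G-\{u,v\}$ is connected I color it by minimality; if $\{u,v\}$ is a $2$-cut I color each component separately and, exactly as in \cref{cl:bridge}, permute the colors on one component freely (distinct components impose no mutual distance-$2$ constraints) to align them. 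A routine count then gives $|\F{uv}|\ge 4$ and $|\F{e}|\ge 3$ for each of $uw,vw,ub,va$.

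Finally I would verify Hall's condition for this family of five sets and conclude with \cref{thm:hall}. I expect the main obstacle to be precisely this last verification together with the connectivity bookkeeping: the bound $|\F{e}|\ge 3$ only yields $|\bigcup|\ge 3$ for three sets, so to reach $|\bigcup|\ge 5$ over all five uncolored edges I must exploit the overlaps among the forbidden sets and the extra color guaranteed by $|\F{uv}|\ge 4$, refining the count whenever several of $w',a',b'$ have degree $3$ or coincide. A short case analysis on the degrees of $w',a',b'$ and on whether $G-\{u,v\}$ splits should close these gaps and yield the desired contradiction.
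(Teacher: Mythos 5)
Your main-case deletion set is exactly the paper's: the paper also removes the two endpoints of the shared edge (your $\{u,v\}$, its $\{w_1,w_2\}$) and extends a coloring of the smaller graph via \cref{thm:hall}. However, your proposal has a genuine gap, and it sits precisely where you suspected. First, your ``routine count'' is wrong for two of the five uncolored edges: for $ub$ the colored edges within distance $2$ are $ab$, $bb'$, $aa'$, $ww'$ and the \emph{two} edges at $b'$, i.e.\ up to six distinct colors, so only $\F{ub}$ has size at least $2$ (and likewise $\F{va}$), not at least $3$. With the correct cardinalities $(4,3,3,2,2)$ for $(uv,uw,vw,ub,va)$, Hall's condition does not follow from sizes alone: nothing you establish excludes, say, $\F{ub}=\F{va}=\set{1,2}$, $\F{uw}=\F{vw}=\set{1,2,3}$, $\F{uv}=\set{1,2,3,4}$, where the union over all five uncolored edges has size $4<5$. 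Ruling out such configurations would require a structural analysis of which colored edges in the various $2$-neighborhoods coincide; your proposal only gestures at this (``a short case analysis \dots should close these gaps''), so as written the proof is not complete --- and the ``extra color guaranteed by $|\F{uv}|\ge 4$'' does not rescue it, as the example shows.

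The paper closes exactly this hole with one additional idea that your plan lacks: after deleting $\{u,v\}$ it also \emph{uncolors} the opposite cycle edge $ab$. This improves every count, since $ab$ lies within distance $2$ of all five other uncolored edges, giving $|\F{uv}|\ge 5$, $|\F{uw}|,|\F{vw}|\ge 4$, $|\F{ub}|,|\F{va}|\ge 3$, and $|\F{ab}|\ge 2$. More importantly, since by \cref{cl:multi} and \cref{cl:33} the tip $w$ is distinct from and non-adjacent to $a$ and $b$, the edges $ab$ and $uw$ may legitimately receive the \emph{same} color in a semistrong coloring. The paper then splits into two cases: if $\F{ab}\cap\F{uw}\neq\emptyset$, it colors these two edges with a common color and finishes the remaining four edges by \cref{thm:hall}; otherwise $|\F{ab}\cup\F{uw}|\ge 2+4=6$, which is precisely the inequality that makes Hall's condition hold for the large subsets of the six sets. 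Without this uncolor-and-reuse trick (or a genuinely completed overlap analysis) your argument does not go through. For the record, your $\deg(w)=2$ case is correct, and your treatment of a disconnected $G-\{u,v\}$ is sound --- in fact, since $G-\{u,v\}$ is an induced subgraph of $G$, the componentwise colorings combine into a partial semistrong coloring with no permutation needed at all --- but neither of these is where the difficulty lies.
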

	
	\begin{proofclaim}
		Suppose to the contrary that there is a $3$-cycle $T=(u,w_1,w_2)$ and a $4$-cycle $F=(v,v',w_1,w_2)$ in $G$.
		By Claims~\ref{cl:multi} and~\ref{cl:33}, the vertex $u$ is distinct from the vertices $v$ and $v'$,
		and $uv$, $uv'$ are not the edges of $G$.
		Consequently, $vv'$ and $uw_1$ may receive the same color in a semistrong edge-coloring.
		
		Consider the graph $G' = G \setminus \set{w_1,w_2}$.
		By Claim~\ref{cl:bridge}, $G'$ is connected, not isomorphic to $K_{3,3}$, and therefore, by the minimality of $G$,
		it admits a semistrong edge-coloring $\sigma$ using at most $8$ colors.
		Consider the partial edge-coloring of $G$ induced by $\sigma$.
		First, we uncolor the edge $vv'$.
		We infer that $vv'$ has at least $2$ available colors,
		$v'w_1$ and $vw_2$ have at least $3$ each,
		$uw_1$ and $uw_2$ have at least $4$ each,
		and $w_1w_2$ has at least $5$ available colors.
		If $A(vv') \cap A(uw_1) \neq \emptyset$, then we can color the edges $vv'$ and $uw_1$ with the same color,
		and the remaining four non-colored edges are colorable by \cref{thm:hall}, a contradiction.
		Thus, we may assume that $A(vv') \cap A(uw_1) = \emptyset$, 
		meaning that $|A(vv') \cup A(uw_1)| \ge 6$. 
		This means that the union of available colors of any five (or six) non-colored edges is of size at least $5$ (or $6$),
		and we can again apply \cref{thm:hall} to extend the coloring to all edges of $G$, a contradiction.		
	\end{proofclaim}

	\begin{claim}
		\label{cl:3no2v}
		No triangle in $G$ is incident with a $2$-vertex.
	\end{claim}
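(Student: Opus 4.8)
The plan is to argue by contradiction inside the minimal counterexample $G$. Suppose some triangle $(a,b,c)$ is incident with a $2$-vertex, say $\deg(a)=2$, so that the two neighbors of $a$ are exactly $b$ and $c$. The first step is to pin down the degrees of $b$ and $c$. If, say, $\deg(b)=2$, then the neighbors of $b$ are only $a$ and $c$, and deleting $c$ would separate $\set{a,b}$ from the rest of $G$; since $G$ has at least $9$ edges this rest is nonempty, so $c$ would be a cut-vertex, contradicting \cref{cl:bridge}. Hence $\deg(b)=\deg(c)=3$, and we may denote by $b'$ and $c'$ the third neighbors of $b$ and $c$, respectively (both distinct from $a$, $b$, $c$).

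The second step is to pass to the smaller graph $G'=G\setminus\set{a}$, obtained by deleting $a$ together with the edges $ab$ and $ac$. Removing a single vertex from a $2$-connected graph keeps it connected, so $G'$ is connected; moreover $b$ and $c$ have degree $2$ in $G'$, whereas $K_{3,3}$ is $3$-regular, so $G'\neq K_{3,3}$. As $G'$ is a connected subcubic graph, distinct from $K_{3,3}$, with fewer edges than $G$, the minimality of $G$ yields a semistrong edge-coloring $\sigma$ of $G'$ using colors from $[8]$; this is a partial semistrong edge-coloring of $G$ in which only $ab$ and $ac$ are uncolored.

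It remains to color $ab$ and $ac$. Every edge within distance $2$ of $ab$ is incident to the closed neighborhood $\set{a,b,c,b'}$ of $\set{a,b}$, so, apart from $ab$ itself and the still-uncolored edge $ac$, there are at most five colored edges in $N^2(ab)$, namely $bc$, $bb'$, $cc'$, and the at most two edges incident with $b'$ other than $bb'$. Consequently $|\F{ab}|\ge 8-5=3$, and symmetrically $|\F{ac}|\ge 3$. Since we only need $ab$ and $ac$ to receive distinct colors, we may choose $\sigma(ab)\in\F{ab}$ and then $\sigma(ac)\in\F{ac}\setminus\set{\sigma(ab)}$, this latter set being nonempty (alternatively one applies \cref{thm:hall}); as the chosen colors avoid every edge within distance $2$, the resulting extension is semistrong, contradicting $\chiss{G}=9$.

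The bookkeeping is entirely routine; the only genuine point is the first step, where $2$-connectivity together with the edge bound forces $b$ and $c$ to have degree $3$ and thereby guarantees that $ab$ and $ac$ each conflict with few colored edges. I expect no further obstacle, since the available-color counts leave ample slack and do not even require the structural restrictions established in \cref{cl:33} and \cref{cl:34}.
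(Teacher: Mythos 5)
Your proposal is correct and follows essentially the same route as the paper: delete the $2$-vertex of the triangle, invoke minimality on the resulting connected subcubic graph (distinct from $K_{3,3}$) to get a semistrong edge-coloring with at most $8$ colors, and extend it to the two uncolored edges, each of which has at least $3$ available colors. Your version merely spells out details the paper leaves implicit (the degree-$3$ status of the other two triangle vertices via \cref{cl:bridge}, the verification that $G'\neq K_{3,3}$, and the explicit count of at most five colored edges in each $2$-edge-neighborhood).
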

	
	\begin{proofclaim}
		By Claim~\ref{cl:bridge}, every triangle in $G$ is incident with at most one $2$-vertex.
		Now, suppose the contrary and let $T=(v_1,v_2,v_3)$ be a triangle incident with a $2$-vertex $v_1$.
		Then, by the minimality, $G' = G \setminus \set{v_1}$ admits a semistrong edge-coloring $\sigma$ using at most $8$ colors.
		The coloring $\sigma$ induces a partial semistrong edge-coloring of $G$ with only the edges $v_1v_2$ and $v_1v_3$ being non-colored.
		Since both non-colored edges have at least $3$ available colors, we can extend $\sigma$ to all edges of $G$,
		hence obtaining a contradiction.
	\end{proofclaim}

	\begin{claim}
		\label{cl:3}
		There is no triangle in $G$.
	\end{claim}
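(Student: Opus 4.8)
The plan is to assume, for contradiction, that $G$ contains a triangle $T=(v_1,v_2,v_3)$ and to extract a rigid local picture from the previous claims. By \cref{cl:3no2v} every vertex of $T$ has degree $3$, so each $v_i$ has a unique neighbor $u_i \notin V(T)$. By \cref{cl:multi} and \cref{cl:33} the vertices $u_1,u_2,u_3$ are pairwise distinct (a coincidence $u_i=u_j$ would create a triangle sharing an edge with $T$), and by \cref{cl:34} they are pairwise non-adjacent (an edge $u_iu_j$ would yield a $4$-cycle sharing an edge with $T$). Thus the six edges to be recovered are the three triangle edges $t_1=v_2v_3$, $t_2=v_1v_3$, $t_3=v_1v_2$ and the three spokes $s_i=v_iu_i$.

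Next I would delete the triangle and set $G'=G-\{v_1,v_2,v_3\}$. In $G'$ each $u_i$ has degree at most $2$, so no component of $G'$ is $3$-regular and hence none is isomorphic to $K_{3,3}$; moreover, since $G$ is connected, every component of $G'$ meets some $u_i$ and has fewer edges than $G$, so by minimality each component — and therefore $G'$ — admits a semistrong edge-coloring $\sigma$ with at most $8$ colors. Viewing $\sigma$ as a partial coloring of $G$, I would then count available colors: a triangle edge $t_i$ has only the two relevant $u$-neighborhoods at distance at most $2$, giving at most $4$ forbidden colors and hence $|\F{t_i}|\ge 4$, while a spoke $s_i$ sees at most $6$ colored edges, all on the $u_i$ side, giving $|\F{s_i}|\ge 2$.

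The extension rests on a semistrong observation that I would establish first: among the six edges, the only pairs that may receive a common color are pairs of spokes. Indeed, assigning $\{s_1,s_2,s_3\}$ a single color is semistrong because each $u_i$ has degree one in $G[\{v_1,v_2,v_3,u_1,u_2,u_3\}]$, whereas two triangle edges are adjacent, and a spoke $s_i$ together with the opposite edge $t_i$ violates the degree-one condition. I would therefore first color the spokes using as few colors as possible — a single common color whenever $\F{s_1}\cap\F{s_2}\cap\F{s_3}\neq\emptyset$ — and then color the three mutually adjacent triangle edges, for which, after deleting the spoke colors, enough colors should survive to invoke \cref{thm:hall}.

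The main obstacle is the simultaneous tightness of these budgets. When the sets $\F{s_i}$ are pairwise disjoint the spokes are forced to use three distinct colors, each of which is then forbidden on every triangle edge; combined with the up to four external colors this can leave a triangle edge with only a single available color, so the three mutually adjacent triangle edges need not admit a system of distinct representatives, and a direct appeal to \cref{thm:hall} fails. Resolving this is exactly where the genuine semistrong slack must be used: unlike in a strong coloring, a triangle edge may legitimately repeat the color of one of the distance-$2$ external edges counted above whenever the corresponding two-edge set satisfies the degree-one condition. I expect the proof to split into a few configurations according to $|\F{s_1}\cup\F{s_2}\cup\F{s_3}|$ and to discharge the worst case by re-using such a distance-$2$ color — or, when the $u_i$ lie in distinct components of $G'$, by independently re-permuting the colors of those components — rather than by a uniform Hall argument.
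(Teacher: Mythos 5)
Your setup coincides with the paper's: the same reduction to $G' = G \setminus \set{v_1,v_2,v_3}$, the same availability counts (at least $2$ colors per spoke, at least $4$ per triangle edge), and the same two key observations — the spokes may share a color since the $u_i$ are pairwise non-adjacent, while a spoke and its opposite triangle edge may not, by the degree-one condition. The easy case where $\F{u_1v_1} \cap \F{u_2v_2} \cap \F{u_3v_3} \neq \emptyset$ is handled identically in the paper: color all spokes alike, leaving at least $3$ colors per triangle edge, and finish by \cref{thm:hall}.

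However, there is a genuine gap: you correctly identify the hard case (available sets of the spokes pairwise disjoint, so the spokes consume up to three colors that are all forbidden on every triangle edge), but you do not resolve it — you only announce that you ``expect'' a split into configurations and that the worst case should be dischargeable by re-using a distance-$2$ color or by permuting colors of components of $G'$. Neither suggestion is carried out, and the second one cannot work in general since the $u_i$ typically lie in a single component of $G'$. This unresolved case is exactly where the paper's actual work lies: it splits into the subcase where two spokes share an available color (color them alike, reduce $\F{u_3v_3}$ to a $2$-set, and either apply \cref{thm:hall} or show that the tight configuration $\F{v_1v_2}=\F{v_2v_3}=\F{v_1v_3}=\set{2,3,4}$ forces an impossible arrangement of the colors $\set{5,6,7,8}$ on the edges at $u_1,u_2,u_3$), and the subcase where all spoke colors are distinct (where one either finds a spoke color avoidable by all triangle edges and recolors a spoke, or shows that all six colors from $[6]$ appear among the $\F{e}$ for $e \in E(T)$ and applies \cref{thm:hall} to all six edges). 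In both tight subcases the contradiction comes from a structural analysis of which colors must appear on the edges incident with the $u_i$'s, not from a uniform Hall argument or from re-using a distance-$2$ color; your proposal stops exactly where that analysis must begin.
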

	
	\begin{proofclaim}
		Suppose to the contrary that $T=(v_1,v_2,v_3)$ is a triangle in $G$. 
		Let $u_1$, $u_2$, $u_3$ be the third neighbors of $v_1$, $v_2$, and $v_3$, respectively. 
		By Claims~\ref{cl:3no2v},~\ref{cl:33}, and~\ref{cl:34}, 
		we have that all three vertices $u_1$, $u_2$, and $u_3$ exist, are distinct, and pairwise non-adjacent.
		We call an edge $u_iv_i$, for every $i \in [3]$, an \textit{incoming edge},
		and every edge in $E(T)$ a \textit{triangle edge}.
		By the minimality, $G \setminus \set{v_1,v_2,v_3}$ admits a semistrong edge-coloring $\sigma$ with at most $8$ colors. 
		We show that $\sigma$ can be extended to all edges of $G$.
		
		Note that there are at least $2$ available colors for every incoming edge and at least $4$ available colors for every triangle edge. 		
		Moreover, note also that an incoming edge must have a color distinct from the color of the opposite triangle edge.
		On the other hand, incoming edges may receive the same colors, since they do not belong to a common $4$-cycle as $u_i$'s are not adjacent.
		
		Suppose first that $|\bigcap_{i=1}^3 \F{u_iv_i}| \ge 1$ and let $1$ be the color available for all the three edges $u_iv_i$. 
		In this case, color all three edges by $1$. There remain at least $3$ available colors for each triangle edge,
		so we can complete the coloring.
		
		Hence, we may assume that $|\bigcap_{i=1}^3 \F{u_iv_i}| = 0$. We consider two subcases.
		\begin{itemize}
			\item[$(i)$] \textit{Two incoming edges have a common available color, say $1 \in \F{u_1v_1} \cap \F{u_2v_2}$.} 
				In this case, we may assume that $\F{u_3v_3}=\set{2,3}$. Color the edges $u_1v_1$ and $u_2v_2$ with $1$.
				Next, consider the available colors of the triangle edges (after coloring the two incoming edges). 
				If $|\bigcup_{e \in E(T)} \F{e} \setminus \set{2,3}| \ge 2$, then there exists a coloring of the four non-colored edges by \cref{thm:hall}.
				
				Thus, we may assume that $\F{v_1v_2} = \F{v_2v_3} = \F{v_1v_3} = \set{2,3,4}$. This in particular means that the
				edges incident with the vertices $u_1$, $u_2$, and $u_3$, distinct from the incoming edges,
				are colored with the colors $\set{5,6,7,8}$. Moreover, for every pair of vertices
				$u_i$ and $u_j$, $i \ne j, i,j\in [3]$, the union of colors on their incident edges (without the color $1$)
				has cardinality $4$. As this is not possible, we reached a contradiction.
			
			\item[$(ii)$] \textit{All the available colors of the incoming edges are distinct} (see Figure~\ref{fig:tri_a} for an illustration).
				\begin{figure}[htp!]
					$$
						\includegraphics{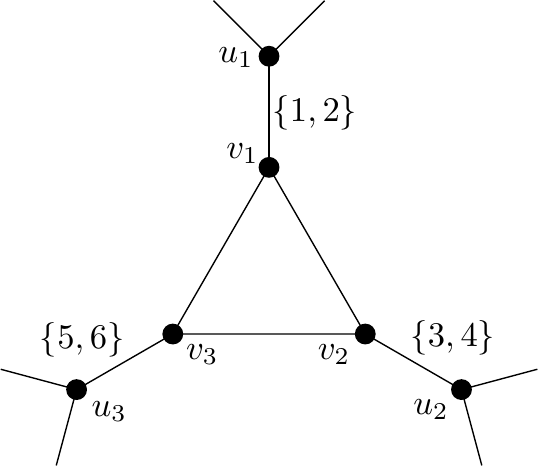}
					$$
					\caption{A triangle with all the available colors of the incoming edges distinct.}
					\label{fig:tri_a}
				\end{figure}
				Suppose first that there exists an available color, say $1$, of some incoming edge such that
				$|\F{e} \setminus \set{1}| \ge 4$ for every triangle edge $e$. 			
				If 
				$$
					\Big| \big( \bigcup_{e \in E(T)} \F{e} \cup \set{3,4,5,6} \big) \setminus \set{1} \Big| \ge 5,
				$$
				then we color $u_1v_1$ by $1$ and extend the coloring to the remaining five edges by \cref{thm:hall}.				
				Thus, we may assume that $\F{v_1v_2} = \F{v_2v_3} = \F{v_1v_3} = \set{3,4,5,6}$. Note that in this case $\C{u_1} = \set{1,7,8}$,
				$\C{u_2} = \set{2,x}$, and $\C{u_3} = \set{2,y}$, where $\set{x,y} = \set{7,8}$. This in particular means that before coloring $u_1v_1$,
				the color $1$ was available for all three triangle edges. Hence, we recolor $u_1v_1$ by $2$, and color the remaining five edges by \cref{thm:hall}.
				
				Therefore, by symmetry, we may assume that every color from $[6]$ is available for some triangle edge. 
				Hence, we infer that
				$|\F{v_1v_2} \cup \F{v_2v_3} \cup \F{v_1v_3}| \ge 6$ and we can color all the six edges by \cref{thm:hall}, a contradiction.
		\end{itemize}
		This establishes the claim.
	\end{proofclaim}

	\begin{claim}
		\label{cl:4no2v}
		No $4$-cycle in $G$ is incident with a $2$-vertex.
	\end{claim}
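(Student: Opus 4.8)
The plan is to reuse the deletion-and-extension strategy of the previous claims, deleting the offending $2$-vertex. Since $G$ is triangle-free by \cref{cl:3}, $2$-connected by \cref{cl:bridge}, subcubic and distinct from $K_{3,3}$, suppose for contradiction that some $4$-cycle $C = (v_1,v_2,v_3,v_4)$ carries a $2$-vertex, say $v_1$; then $v_1$ is adjacent to exactly $v_2$ and $v_4$, and $v_2 \not\sim v_4$ because $G$ is triangle-free. Write $a$ and $b$ for the third neighbours of $v_2$ and $v_4$, and $c$ for the third neighbour of $v_3$ (some of these may be absent or coincide). First I would delete $v_1$: the graph $G' = G \setminus \set{v_1}$ is connected by $2$-connectivity, and it cannot be $K_{3,3}$, since otherwise $v_2$ and $v_4$ would have degree $4$ in $G$. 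By minimality $G'$ admits a semistrong edge-coloring $\sigma$ with at most $8$ colors, and this leaves exactly the two edges $e_1 = v_1v_2$ and $e_2 = v_1v_4$ to be colored.

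The crucial feature to exploit is that $e_1$ and $e_2$ meet at the degree-$2$ vertex $v_1$. Because $\deg(v_1)=2$, the vertex $v_1$ can serve as the degree-$1$ endpoint witnessing the semistrong condition for both $e_1$ and $e_2$, as long as the color assigned to $e_1$ (resp. $e_2$) does not also occur at $v_4$ (resp. $v_2$). Thus the only colors \emph{unconditionally} forbidden for $e_1$ are the two colors at $v_2$ (properness) together with $\sigma(v_3v_4)$, the edge opposite to $e_1$ on $C$, which may not repeat since $C$ must be rainbow; the situation for $e_2$ is symmetric. Every other color is a candidate, and a candidate equal to the color of a distance-$2$ edge $f$ is legal unless reusing it would either create a second monochromatic $4$-cycle or lengthen a monochromatic path of $\sigma$ through $f$ into one with no degree-$1$ endpoint. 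My goal is to show that each of $e_1,e_2$ retains at least two legal colors, after which \cref{thm:hall} (or a direct choice of two distinct colors) completes the extension and contradicts the minimality of $G$.

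The hard part is precisely this last count, and it is where the semistrong relaxation must genuinely be used: a naive strong-availability estimate forbids the colors of all seven colored edges within distance $2$ of $e_i$ and so guarantees only $8-7=1$ color for each of $e_1,e_2$, which is not enough to color two adjacent edges. I would therefore argue that among the four candidate colors coming from the distance-$2$ edges (those incident with $v_4$, with $v_3$ other than the cycle edges, and with $a$), at least one is genuinely reusable, i.e.\ its color class is not already a monochromatic $P_4$ oriented toward $e_i$. The main obstacle is the worst case in which every such class is saturated, which would force $\F{e_1}$ and $\F{e_2}$ to be one and the same singleton; I expect to dispose of it either by showing the forced local structure contradicts an earlier claim or the bound $\chiss{G'}\le 8$, or by a small Kempe-type recoloring on two colors near $v_2$ or $v_4$ that frees a second color before extending. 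Finally I would treat the degenerate configurations separately — $a=b$ (a second $4$-cycle through $v_1$), $v_3$ of degree $2$, and coincidences that shorten the relevant neighbourhoods — since each of these only decreases the number of forbidden colors and hence makes the extension easier.
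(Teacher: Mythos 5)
Your reduction is sound as far as it goes ($G' = G\setminus\set{v_1}$ is connected by \cref{cl:bridge}, subcubic, and not $K_{3,3}$, so minimality gives a coloring $\sigma$ of $G'$ with at most $8$ colors), but the argument stops exactly at the step that carries all of the difficulty. After deleting only $v_1$, each of $e_1=v_1v_2$, $e_2=v_1v_4$ can have up to seven colored edges in its $2$-edge-neighborhood, so strong-style counting guarantees only one available color per edge, while you need two distinct ones. You propose to recover the second color from the semistrong relaxation, i.e., by reusing the color of some distance-$2$ edge $f$, but you never prove that a reusable color exists: the sentences ``I would therefore argue that \dots at least one is genuinely reusable'' and ``I expect to dispose of it either by \dots or by a small Kempe-type recoloring'' are precisely the missing content, not a proof of it. Note also that deciding whether a color is reusable is not a local check around $v_1$: adding $e_1$ to the color class of $f$ puts $v_1$ and $v_2$ into $V(M)$ for the \emph{entire} class $M$, which can destroy the degree-one witness of an edge of $M$ far away from the cycle. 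Moreover, your claim that the degenerate configurations only make things easier fails for $a=b$: there the vertices $v_1,v_2,a,v_4$ form a second $4$-cycle through $v_1$, which must also be rainbow and hence adds a forbidden color rather than removing one.

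The paper's own proof suggests this gap is not patchable by local recoloring: instead of deleting just the $2$-vertex, it performs heavier surgery according to where the $3$-vertices sit (two adjacent $2$-vertices; two opposite $2$-vertices; $u_1=u_3$; $u_1u_3\in E(G)$; $u_1u_3\notin E(G)$). In the hardest subcase (third neighbors $u_1$ of $v_1$ and $u_3$ of $v_3$ distinct and non-adjacent, with $v_2$ the $2$-vertex) it colors $G' = (G \setminus V(F)) \cup \set{u_1u_3}$, i.e., it \emph{adds} the edge $u_1u_3$, so that the coloring of $G'$ delivers a single color $\sigma(u_1u_3)$ that is legal simultaneously on $u_1v_1$ and on $u_3v_3$ in $G$; reusing one color on two of the uncolored edges is what creates enough slack for \cref{thm:hall} to finish on the remaining edges. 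Your approach has no mechanism for forcing such a coincidence of available colors, and without one the worst case you yourself identify (both $\F{e_1}$ and $\F{e_2}$ reduced to the same singleton) remains standing.
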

	
	\begin{proofclaim}
		By Claim~\ref{cl:bridge}, we have that there are at most two $2$-vertices incident with any $4$-cycle.
		We again proceed by contradiction.
		Let $F = (v_1,v_2,v_3,v_4)$ be a $4$-cycle in $G$.
		Suppose first that the vertices $v_1$ and $v_2$ are $2$-vertices (and thus $v_3$ and $v_4$ are $3$-vertices).
		By the minimality of $G$, the graph $G' = G \setminus \set{v_1,v_2}$ admits a semistrong edge-coloring $\sigma$ using at most $8$ colors.
		The coloring $\sigma$ induces a partial coloring of $G$ with only the three edges being non-colored.
		Since each of the three edges has at least $3$ available colors, we can extend $\sigma$ to $G$, a contradiction.
		
		Thus, we may assume that in $F$, there is a pair of opposite $3$-vertices, say $v_1$ and $v_3$,
		and at least one $2$-vertex, say $v_2$. 
		However, if $v_4$ is a $2$-vertex also, then we proceed as in the proof of Claim~\ref{cl:33}.
		Thus, we may assume that $d(v_4) = 3$.
		Let $u_1$ and $u_3$ be the neighbors of $v_1$ and $v_3$, respectively, not incident with $F$.
		In the case when $u_1 = u_3$, we consider a partial coloring of $G$ obtained from a coloring of $G' = G \setminus \set{v_2}$.
		There are at least $2$ available colors for the non-colored edges $v_1v_2$ and $v_2v_3$, and so we can extend the coloring to all edges of $G$, a contradiction.
		
		Therefore, we may assume that $u_1 \neq u_3$. 
		Suppose first that $u_1$ and $u_3$ are adjacent.
		Let $G' = G \setminus \set{v_1,v_2,v_3}$. 
		Clearly, $G'$ is not isomorphic to $K_{3,3}$ and so it admits a semistrong edge-coloring $\sigma$ using at most $8$ colors.
		In the coloring of $G$ induced by $\sigma$, we uncolor the edge $u_1u_3$, obtaining seven non-colored edges.
		The edge $u_1u_3$ has at least $2$ available colors,
		$u_1v_1$ and $u_3v_3$ have at least $3$,
		$v_1v_2$ and $v_2v_3$ have at least $6$,
		and $v_1v_4$ and $v_3v_4$ have at least $4$.
		Now, we color $u_1u_3$ with an available color $\alpha$. 
		Since any edge of $F$ may receive the same color as $u_1u_3$, we remove $\alpha$ from $A(e)$ for no edge of $F$, 
		while the number of available colors for $u_1v_1$ and $u_3v_3$ may decrease by $1$.
		But now, it is easy to see that \cref{thm:hall} applies to the remaining six non-colored edges, and thus $\sigma$ can be extended to all the edges of $G$.
	
		Finally, we may assume that $u_1$ and $u_3$ are not adjacent.
		In this case, by the minimality of $G$, the graph $G' = (G \setminus V(F)) \cup \set{u_1u_3}$ admits a semistrong edge-coloring $\sigma$ using at most $8$ colors.
		In the coloring of $G$ induced by $\sigma$, we color the edges $u_1v_1$ and $u_3v_3$ with the color $\sigma(u_1u_3)$.
		Then, we color the edge $e$ incident with $v_4$ and not being on $F$ with a color which does not appear in $N^2(e)$ 
		(there is at least one such color).
		Now, there remain four non-colored edges with $v_1v_2$ and $v_2v_3$ having at least $4$ available colors
		and $v_1v_4$ and $v_3v_4$ having at least $2$ available colors. 
		We can color them by \cref{thm:hall}, a contradiction.
	\end{proofclaim}

	\begin{claim}
		\label{cl:K23}
		No subgraph of $G$ is isomorphic to $K_{2,3}$.
	\end{claim}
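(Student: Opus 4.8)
The plan is to argue by the minimality of $G$ after deleting the two vertices of degree $3$ on the small side of the $K_{2,3}$. Write the copy of $K_{2,3}$ with parts $\set{a,b}$ and $\set{x,y,z}$, so that both $a$ and $b$ are adjacent to each of $x$, $y$, $z$. The three $4$-cycles $axby$, $axbz$, $aybz$ lie inside the $K_{2,3}$, so \cref{cl:4no2v} forces all five vertices $a,b,x,y,z$ to have degree $3$. Hence $a$ and $b$ have no further neighbours, while $x$, $y$, $z$ have unique third neighbours $x',y',z'$ outside the $K_{2,3}$; note $x',y',z'\notin\set{a,b}$ and, since $G$ is triangle-free by \cref{cl:3}, also $x'\neq y$ and so on. If $x'=y'=z'$ then $G\cong K_{3,3}$, which is excluded, so the outside neighbours are not all equal. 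I would record separately the degenerate possibilities that two of $x',y',z'$ coincide (making two of the spokes $xx',yy',zz'$ adjacent).

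First I would set $G'=G\setminus\set{a,b}$. Then $x,y,z$ become pendant vertices of $G'$, and every component of $G'$ reaches one of them through the spokes, so each component contains a degree-$1$ vertex and is therefore not isomorphic to $K_{3,3}$; as each component is a connected subcubic graph with fewer edges than $G$, minimality yields a semistrong edge-colouring $\sigma$ of $G'$ with at most $8$ colours. It remains to extend $\sigma$ to the six edges of the $K_{2,3}$. Since every two \emph{non-adjacent} of these six edges lie on a common $4$-cycle inside the $K_{2,3}$, all six must receive pairwise distinct colours.

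The structural point making this tractable is that, in $G$, the only $\sigma$-coloured edges within distance $2$ of a $K_{2,3}$-edge are the three spokes $xx',yy',zz'$ together with the at most two further edges at the relevant outside vertex. Consequently the available sets pair up, $\F{ax}=\F{bx}=:A_x$, $\F{ay}=\F{by}=:A_y$, $\F{az}=\F{bz}=:A_z$, each of size at least $3$. Writing $c_x=\sigma(xx')$, $c_y=\sigma(yy')$, $c_z=\sigma(zz')$, the three colours $c_x,c_y,c_z$ are forbidden for all six edges, and they form the common obstruction to \cref{thm:hall}: the six-list system admits a system of distinct representatives (the six distinct colours we need) as soon as $|A_x\cup A_y|\ge 4$ for each pair and $|A_x\cup A_y\cup A_z|\ge 6$.

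The main obstacle is precisely this Hall condition, which can fail when $c_x,c_y,c_z$ are distinct. Here I would exploit that the three spokes are pairwise at distance at least $3$ in $G$ (the shortest edge-path between, say, $xx'$ and $yy'$ runs $x'\,x\,a\,y\,y'$), so recolouring them does not interfere with one another, and that each spoke has at least two colours available inside $G'$. I would recolour $xx',yy',zz'$ to reduce the number of distinct spoke-colours, thereby enlarging $A_x\cup A_y\cup A_z$ until Hall applies and $\sigma$ extends to all of $G$, contradicting $\chiss{G}=9$. Alternatively one may observe that no coloured edge actually shares a $4$-cycle with a $K_{2,3}$-edge, so the spoke-colours are forbidden only through the strong (distance-$2$) condition and can in part be reused under the weaker semistrong requirement. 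I expect the crux to be twofold: (i) checking that such a recolouring of the spokes remains a semistrong colouring of $G'$ while shrinking the common forbidden set enough, and (ii) dispatching the degenerate configurations in which two of $x',y',z'$ coincide, where a spoke already contributes one of its colours to the outside edges and a short ad hoc argument is needed.
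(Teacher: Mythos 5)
Your setup is fine: deleting $\set{a,b}$, noting that every component of $G'$ contains one of the pendant vertices $x,y,z$ and hence is not $K_{3,3}$, the rainbow condition on the six $K_{2,3}$-edges, and the paired lists $\F{ax}=\F{bx}=A_x$ of size at least $3$. The gap is in the core of the plan. Every one of the lists $A_x,A_y,A_z$ excludes all three spoke colors $c_x,c_y,c_z$, so if these are pairwise distinct then $|A_x\cup A_y\cup A_z|\le 5<6$ and the condition of \cref{thm:hall} \emph{provably} fails for the six edges. Your remedy is to recolor the spokes until fewer distinct spoke colors occur, but each spoke is only guaranteed two admissible colors in $G'$, and nothing prevents the three spoke lists from being pairwise disjoint, in which case the spoke colors stay pairwise distinct under every recoloring. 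Even arranging $c_x=c_y\ne c_z$ only caps the union at $6$, and it can still be smaller (for instance if the pairs of edges at $x'$, $y'$, $z'$ all carry the same two colors). So within the strong, distance-$2$ notion of availability the extension is in general impossible: one \emph{must} invoke the semistrong relaxation and let a $K_{2,3}$-edge share a color with a non-adjacent spoke. You mention this only as an alternative observation, but it is the crux, and it is exactly what the paper's proof is organized around: in the generic case it deletes all five vertices of the $K_{2,3}$ (so the spokes are uncolored as well), trims each spoke list to size $2$, and colors sequentially, giving each $K_{2,3}$-edge a color outside the list of its own spoke and immediately reusing that color on any other spoke whose list contains it. Without a worked-out version of this sharing argument, your proof does not close.

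In addition, the case analysis is incomplete. You isolate the coincidences among $x',y',z'$, but not the case where they are pairwise distinct and two of them are adjacent, say $x'y'\in E(G)$; this is not excluded at this stage (it only creates a $5$-cycle, and \cref{cl:4} is proved \emph{after} the present claim, so it cannot be used here). In that configuration your key geometric assertion that the spokes are pairwise at distance at least $3$ is false ($xx'$ and $yy'$ are at distance $2$ through $x'y'$), so the claimed non-interference of the spoke recolorings breaks down. Likewise, in the coincidence case $x'=y'$ your observation that no colored edge lies on a common $4$-cycle with a $K_{2,3}$-edge fails: $(a,x,x',y)$ is a $4$-cycle containing the spoke $yx'$, so that spoke's color is genuinely forbidden on $ax$ and $ay$, not merely ``strongly'' forbidden. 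The paper devotes a separate argument to each of these situations (its cases $(i)$ and $(ii)$), and both again rest on sharing a color between an incoming edge and a non-adjacent $K_{2,3}$-edge before applying \cref{thm:hall}; deferring them to ``a short ad hoc argument'' leaves real work undone.
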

	
	\begin{proofclaim}
		Suppose to the contrary that $G$ contains a subgraph isomorphic to $K_{2,3}$ 
		with bipartition sets $X=\set{x_1,x_2}$ and $Y=\set{y_1,y_2,y_3}$. 
		By Claim~\ref{cl:3}, no two vertices of $Y$ are adjacent. 
		From Claim~\ref{cl:4no2v} it follows that every vertex $y \in Y$ has a neighbor $u_i$ distinct from $x_1$ and $x_2$. 		
		We consider three cases.

		\begin{itemize}
			\item[$(i)$] Some of $u_i$'s are the same. 
				Since $G$ is not isomorphic to $K_{3,3}$, 
				we may assume that $u_1=u_2 \ne u_3$.  
				Let $G'=G \setminus \set{x_1,x_2,y_1,y_2}$. 
				Clearly, $G'$ is distinct from $K_{3,3}$ and, by Claim~\ref{cl:bridge}, it is connected. 
				Thus, by the minimality, $G'$ admits a semistrong edge-coloring $\sigma$ using at most $8$ colors. 
				In $G$, the coloring $\sigma$ induces a partial semistrong edge-coloring with eight non-colored edges; 
				each of the edges $x_1y_1$, $x_1y_2$, $x_2y_1$, and $x_1y_2$ have at least $6$ available colors, 
				and $u_1y_1$, $u_1y_2$, $x_1y_3$, and $x_2y_3$ have at least $5$. 
				Thus, $|A(u_1y_1) \cap A(x_1y_3)| \ge 2$ and $|A(u_1y_2) \cap A(x_2y_3)| \ge 2$, 
				which means that we can color $u_1y_1$ and $x_1y_3$ with a common available color $\alpha$, 
				and $u_1y_2$ and $x_2y_3$ with a common available color distinct from $\alpha$. 
				This is possible, since $u_1$ and $y_3$ are not adjacent. 
				After that, the remaining four non-colored edges still have at least $4$ available colors each, 
				and so we can color them by Theorem~\ref{thm:hall}, hence extending $\sigma$ to all edges of $G$, a contradiction.
		
			\item[$(ii)$] All $u_i$'s are distinct, but some of them are adjacent. 
				By the symmetry, we may assume that $u_1u_2 \in E(G)$. 
				Again, let $G'=G \setminus \set{x_1,x_2,y_1,y_2}$, which is connected by Claim~\ref{cl:bridge} 
				and by the minimality admits a semistrong edge-coloring $\sigma$ using at most 8 colors. 				
				In the corresponding partial coloring of $G$ with eight non-colored edges, 
				we additionaly recolor $u_1u_2$ with a color that does not appear in its $2$-edge-neighborhood  
				(there are at least two such colors) and is distinct from the color of $u_3y_3$.
				Now, we have that $x_1y_1$, $x_1y_2$, $x_1y_3$, $x_2y_1$, $x_2y_2$, and $x_2y_3$ each have at least $5$ available colors, 
				and $u_1y_1$ and $u_2y_2$ have at least $3$. 
				Moreover, $|A(u_1y_1) \cap A(x_1y_2)| \ge 2$, since the two colors from $C(u_2)$ are forbidden for both edges. 
				Analogously, $|A(u_2y_2) \cap A(x_2y_1)| \ge 2$. 
				Next, we color $u_1y_1$ and $x_1y_2$ with a common available color $\alpha$, 
				and $u_2y_2$ and $x_2y_1$ with a common available color distinct from $\alpha$. 				
				After that, the remaining four non-colored edges have at least $3$ available colors each.
				If the union of their available colors is of size at least $4$, then we apply Theorem~\ref{thm:hall},
				and so extend $\sigma$ to all the edges of $G$, a contradiction.
				Thus, we may assume that all four edges have the same set of $3$ available colors.
				But then, we can color $x_2y_2$ with the color of $u_1u_2$, while the three remaining non-colored edges 
				still have $3$ available colors each.
				Now, we can again apply Theorem~\ref{thm:hall} to obtain a semistrong edge-coloring of all edges of $G$, a contradiction.
			
			\item[$(iii)$] All $u_i$'s are distinct and pairwise non-adjacent. 
				In this case, we use $G' = G \setminus (X \cup Y)$, 
				which admits a semistrong edge-coloring $\sigma$ using at most $8$ colors. 
				The three edges $u_iy_i$ (call them {\em incoming edges}) each have at least $2$ available colors,
				and all the remaining non-colored edges have at least $6$ available colors. 				
				In order to simplify our argument, we reduce the number of available colors for the incoming edges;
				in particular, if there are more than $2$ available colors for an incoming edge $e$, 
				then we delete some of them from $A(e)$ to obtain $|A(e)| = 2$.
				
				Now, we proceed as follows.
				We first color $x_1y_1$ with a color $\alpha_1 \notin A(u_1y_1)$. 
				If $\alpha_1 \in A(u_2y_2)$, then we also color $u_2y_2$ with it, 
				and, similarly, if $\alpha_1 \in A(u_3y_3)$, then we also color $u_3y_3$ with it.
				Next, we color $x_2y_1$ with a color $\alpha_2 \notin A(u_1y_1)$, 
				and as above, we color any non-colored incoming edge having $\alpha_2$ as an available color 
				(note that such as edge is clearly distinct from $u_1y_1$). 
				In this way, every non-colored incoming edge retains $2$ available colors,
				and the remaining non-colored edges have at least $4$ available colors each.
				We continue by coloring $x_1y_2$ with a color $\alpha_3 \notin A(u_2y_2)$,
				and coloring any non-colored incoming edge having $\alpha_3$ as an available color.
				Finally, we color $x_2y_3$ with a color $\alpha_4 \notin A(u_3y_3)$,
				and color any non-colored incoming edge having $\alpha_4$ as an available color.
				At this point, all the non-colored edges have at least $2$ available colors.
				We finish by coloring the edge $x_1y_3$ and $x_2y_2$ by distinct available colors, 
				where we use their colors to color any non-colored incoming edge not adjacent to them,
				and complete the coloring by coloring the remaining non-colored incoming edges by their available colors.
				Thus, we colored all the edges of $G$, a contradiction.
		\end{itemize}
	\end{proofclaim}

	\begin{claim}
		\label{cl:4}
		There is no $4$-cycle in $G$.
	\end{claim}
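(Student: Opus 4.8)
The plan is to argue, as in the previous claims, by extending a colouring of a smaller graph, so suppose for contradiction that $F=(v_1,v_2,v_3,v_4)$ is a $4$-cycle in $G$. First I would pin down the local structure. By \cref{cl:4no2v} every $v_i$ is a $3$-vertex, so each has a third neighbour $u_i\notin V(F)$. Since $G$ is triangle-free (\cref{cl:3}), consecutive vertices of $F$ share no neighbour, giving $u_i\ne u_{i+1}$; and if two opposite vertices shared their outer neighbour, say $u_1=u_3$, then $\set{v_1,v_3}$ and $\set{v_2,v_4,u_1}$ would induce a $K_{2,3}$, forbidden by \cref{cl:K23}. The same two claims show that no $u_i$ is adjacent to a $v_j$ with $j\ne i$. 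Hence $u_1,u_2,u_3,u_4$ are four distinct vertices attached to $F$ only through the four \emph{incoming} edges $u_iv_i$.

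Next I would set up the colouring to be extended. Let $G'=G\setminus V(F)$; removing $F$ deletes exactly the four incoming edges and the four cycle edges, i.e.\ eight edges. Every component of $G'$ is a connected subcubic graph with fewer edges than $G$, and none of them can be isomorphic to $K_{3,3}$: a $K_{3,3}$ component would be $3$-regular and thus have no edge to $V(F)$, contradicting the connectedness of $G$. By minimality each component admits a semistrong edge-colouring with at most $8$ colours, and together they give such a colouring $\sigma$ of $G'$. Counting forbidden colours then shows that each cycle edge $v_iv_{i+1}$ has $|\F{v_iv_{i+1}}|\ge 4$ (only the four edges at $u_i,u_{i+1}$ are coloured within distance $2$) while each incoming edge $u_iv_i$ has $|\F{u_iv_i}|\ge 2$. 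I would also record the sharing rules that make the relaxation useful: opposite incoming edges $u_1v_1,u_3v_3$ (and $u_2v_2,u_4v_4$) never lie on a common $4$-cycle, so they may always receive the same colour, whereas consecutive incoming edges $u_iv_i,u_{i+1}v_{i+1}$ lie on the common $4$-cycle $u_iv_iv_{i+1}u_{i+1}$ exactly when $u_i\sim u_{i+1}$.

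The heart of the argument is to colour the four incoming edges with as few colours as possible and then finish the cycle edges by \cref{thm:hall}. My primary tool would be the edge-addition trick from \cref{cl:4no2v}: when $u_1\not\sim u_3$, apply minimality to $G'\cup\set{u_1u_3}$ (still subcubic, since $u_1,u_3$ lost their edges into $F$, and of fewer edges than $G$) and reuse the colour of the auxiliary edge $u_1u_3$ on both $u_1v_1$ and $u_3v_3$; I would do the same for $u_2v_2,u_4v_4$ via $u_2u_4$. This colours all incoming edges with at most two colours, so each cycle edge keeps at least $4-2=2$ available colours, and one checks that the four cycle edges then satisfy Hall's condition and can be coloured with four distinct colours. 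The reused colours are available in the strict sense (absent within distance $2$ of the edges they are placed on) and each repeated colour is carried by a single compatible pair of edges, so every resulting colour class stays semistrong.

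It remains to treat the degenerate cases, which is where I expect the real work to lie. When a pairing edge cannot be introduced --- because the opposite outer vertices are already adjacent, or because adding the edge would create a $K_{3,3}$ or a parallel edge --- I would fall back on colouring the incoming edges directly, exploiting that opposite incoming edges can always share and imitating the sequential colouring of \cref{cl:K23}$(iii)$. The main obstacle is the worst case in which the sets $\F{u_iv_i}$ coincide heavily while several $u_i$ are pairwise adjacent (so few merges are allowed), and simultaneously all four cycle edges are left with the same two available colours; ruling this out seems to require either a local recolouring that frees a colour or a finer case split on the adjacencies among $u_1,\dots,u_4$, together with the routine verification that each reused distance-$2$ colour keeps its colour class semistrong.
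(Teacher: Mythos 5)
Your overall strategy is the paper's: delete $V(F)$, introduce auxiliary edges joining opposite outer vertices, reuse their colors on the opposite pairs of incoming edges, and finish the cycle edges via \cref{thm:hall}. However, there are genuine gaps at exactly the points you defer. First, the ``degenerate cases'' you leave open do not need a separate fallback, and your fallback is not an argument: since semistrong colorings are defined for multigraphs (via the underlying simple graph) and the minimal counterexample is taken over multigraphs (this is why Claim~\ref{cl:multi} is proved at all), one can add \emph{both} $u_1u_3$ and $u_2u_4$ unconditionally, parallel edges permitted; the only exceptional situation is $G'\cong K_{3,3}$, which forces $G$ to be the $5$-prism, and that graph is colorable with $8$ colors directly. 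By instead conditioning on $u_1\not\sim u_3$ and admitting that the worst case (heavily overlapping lists plus adjacencies among the $u_i$) ``seems to require'' further ideas, you leave the core difficulty unresolved.

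Second, two of your technical assertions fail. Your claim that the reused colors are ``absent within distance $2$ of the edges they are placed on'' is false: a semistrong coloring of $G'$ may repeat $\sigma(u_1u_3)$ at distance $2$ from $u_1u_3$, and such an edge lies at distance $2$ from $u_1v_1$ in $G$. Reuse on the single pair $u_1v_1,u_3v_3$ still preserves semistrongness (because $v_1,v_3$ retain degree $1$ in the induced subgraph), but the issue becomes fatal in the case you never consider, namely $\sigma(u_1u_3)=\sigma(u_2u_4)$: placing that one color on all four incoming edges gives every $v_i$ degree $3$ in the induced subgraph of that class, and semistrongness then hinges on the $u_i$-sides being conflict-free, which is not guaranteed. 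The paper's split into $\alpha=2$ and $\alpha=1$ exists precisely for this. Finally, your Hall step for the cycle edges is asserted, not verified, and with plain availability sets it can genuinely fail (four edges sharing the same two available colors --- the very scenario you flag). The paper repairs this with the relaxed sets $A^*(e)\supseteq \F{e}$ of colors usable without violating semistrongness, together with the observation that any color $\beta\notin \C{u_i}$ can go on a cycle edge at $v_i$ --- justified, when $\beta\in \C{u_{i-1}}\cap \C{u_{i+1}}$, by the presence of $u_{i-1}u_{i+1}$ in $G'$ and the semistrong property of $\sigma$ --- which yields $|A^*(v_{i-1}v_i)\cup A^*(v_iv_{i+1})|\ge 4$ and makes Hall's condition checkable. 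Without some substitute for this mechanism, your proof does not close.
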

	
	\begin{proofclaim}
		Suppose to the contrary that $F = (v_1,v_2,v_3,v_4)$ is a $4$-cycle in $G$. 
		We denote the third neighbor of $v_i$ by $u_i$, for every $i \in [4]$
		(see Figure~\ref{fig:sq_a}), and call the edges $u_iv_i$ \textit{incoming}. 
		The edges of $F$ are \textit{cycle} edges.
		From Claim~\ref{cl:4no2v} we infer that all $u_i$'s exist,
		and by Claims~\ref{cl:3} and~\ref{cl:K23}, they are all distinct.
		Let $G'$ be the graph obtained from $G$ by removing the vertices of $F$, and adding the edges $u_1u_3$ and $u_2u_4$ 
		(it is possible that parallel edges are introduced).
		Note that $G'$ is not necessarily connected, but in such a case, by Claim~\ref{cl:K23}, none of its components is isomorphic to $K_{3,3}$.		
		Thus, by the minimality, either $G'$ is isomorphic to $K_{3,3}$, meaning that $G$ is a $5$-prism, which admits a coloring with $8$ colors, 
		or there exists a semistrong edge-coloring $\sigma$ of $G'$ using at most $8$ colors. 
		Without loss of generality, we may assume that $\sigma(u_1u_3) = 1$ and $\sigma(u_2u_4) = \alpha$, where $\alpha \in \set{1,2}$.
		The coloring $\sigma$ induces a partial coloring of $G$ with the incoming edges and the cycle edges of $F$ being non-colored.
		We show that $\sigma$ can be extended to all the edges of $G$.
		\begin{figure}[htp!]
			$$
				\includegraphics{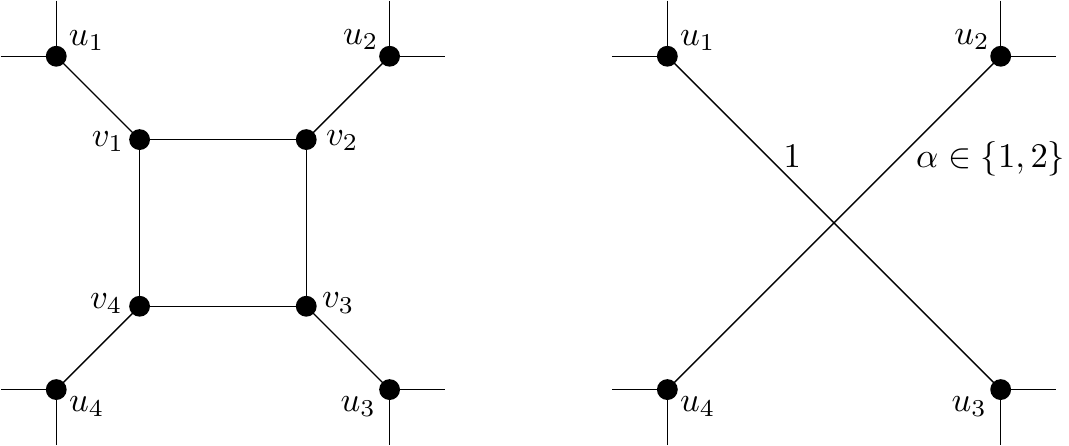}
			$$
			\caption{The $4$-cycle $F$ with the four distinct neighboring vertices (on the left), and the corresponding configuration in the graph $G'$ (on the right).}
			\label{fig:sq_a}
		\end{figure}				
		
		First, observe the following.
		By the construction, if there is a color $\beta \notin C(u_i)$, then $\beta$ can be used for at least one cycle edge incident with $v_i$. 
		In particular, if $\beta \notin C(u_{i+1})$, then $\beta$ can be used for $v_iv_{i+1}$. 
		On the other hand, if $\beta \in C(u_{i-1}) \cap C(u_{i+1})$, then there are two possibilities. 
		\begin{itemize}		
			\item[$(i)$] $u_{i-1}u_{i+1}\notin E(G)$ or $e' = u_{i-1}u_{i+1} \in E(G)$ and $\sigma(e') \ne \beta$. 
				Since $u_{i-1}u_{i+1} \in E(G')$, it follows that any of $v_iv_{i-1}$ and $v_iv_{i+1}$ can be colored with $\beta$.
			\item[$(ii)$] $e' = u_{i-1}u_{i+1} \in E(G)$ and $\sigma(e')=\beta$. 
				In this case, $\beta \notin \sigma(N_{u_{i-1}}^2(u_{i-1}u_{i+1}))$ or $\beta \notin \sigma(N_{u_{i+1}}^2(u_{i-1}u_{i+1}))$, 
				say $\beta \notin \sigma(N_{u_{i-1}}^2(u_{i-1}u_{i+1}))$, 
				and thus $\beta$ can be used for $v_iv_{i-1}$.
		\end{itemize}
		
		Now, we consider two cases. Suppose first that $\alpha = 2$.
		We color $u_1v_1$ and $u_3v_3$ with $1$ (this can be done, since $\sigma(u_1u_3) = 1$ and $u_3v_3 \notin N_{v_1}^2(u_1v_1)$, $u_1v_1 \notin N_{v_3}^2(u_3v_3)$), 
		and $u_2v_2$ and $u_4v_4$ with $2$ (with an analogous reasoning).
		Now, let $A^*(e)$, for $e \in E(F)$, denote the set of colors that can be used to color the edge $e$ without violating the assumptions of the semistrong edge-coloring. 
		Clearly, $A(e) \subseteq A^*(e)$ and so $|A^*(e)| \ge |A(e)| \ge 2$. 
		Moreover, by the observation above, we also infer that $|A^*(v_iv_{i-1}) \cup A^*(v_iv_{i+1})| \ge 4$. 
		Thus, the sets $A^*(e)$ fulfill conditions of Theorem~\ref{thm:hall}, and we can extend $\sigma$ to all edges of $G$, a contradiction.
		
		Next, suppose that $\alpha = 1$.
		We again color $u_1v_1$ and $u_3v_3$ with $1$,
		but in this case, we cannot necessarily color $u_2v_2$ and $u_4v_4$ with the $1$ as the semistrong condition might be violated
		in the case if $1$ appears also in the $2$-edge-neighborhood of $u_1u_3$ in $G'$.
		However, there is at least $1$ available color distinct from $1$ for each of $u_2v_2$ and $u_4v_4$. 
		If there is the same available color for both edges, we color them with it and proceed as in the previous case.
		Hence, we may assume that they are different, say $2$ and $3$, respectively, and so we color $u_2v_2$ with $2$
		and $u_4v_4$ with $3$.		
		We have that, $|A^*(v_1v_2) \cup A^*(v_2v_3)| \ge 4$, 
		$|A^*(v_3v_4) \cup A^*(v_1v_4)| \ge 4$, 
		and $|A^*(e)| \ge 2$ for every $e \in E(F)$. 
		Thus, we can again apply Theorem~\ref{thm:hall} and extend $\sigma$ to all edges of $G$, a contradiction.
		This establishes the claim.
	\end{proofclaim}

	From the above claims it follows that the graph $G$ is a bridgeless subcubic graph with girth at least $5$. 
	
	Now, let $\sigma$ be a proper edge-coloring of $G$ with the minimum number of edges $uv$
	having an edge of color $\sigma(uv) \in N^2_u(uv) \cap N^2_v(uv)$;
	in other words, $uv$ is the middle edge of some path $P_6$ induced by the endvertices of edges colored with $\sigma(uv)$.
	We denote the number of such edges in $G$ by $\iota_6(\sigma)$ and we will refer to them as the {\em bad middle edges}. 
	Additionally, among all such colorings $\sigma$, 
	we choose a coloring with the minimum number of edges of the same color at distance $2$.
	We denote the number of such pairs in $G$ by $\iota_4(\sigma)$ and we refer to such pairs as the {\em bad pairs}.
	Clearly, $\iota_6(\sigma) > 0$ and every bad middle edge is involved in at least two bad pairs. 
	Note also that an edge can be the bad middle edge of several $P_6$s, but we count it only once.
	
	Let $uv$ be a bad middle edge in $G$. 
	We may assume that $\sigma(uv) = 1$.	
	Consider the $2$-edge-neighborhood of the edge $uv$ and label the neighboring vertices as in Figure~\ref{fig:uv}.
	By Claims~\ref{cl:3} and~\ref{cl:4}, all the neighbors of $u$ and $v$ are distinct and non-adjacent.
	There are at most $8$ edges at distance $2$ from $uv$, where at least two of them are colored with $1$.
	By the minimality of $\iota_6(\sigma)$, we cannot recolor $uv$ without introducing at least one new bad middle edge.
	\begin{figure}[htp!]
		$$
			\includegraphics{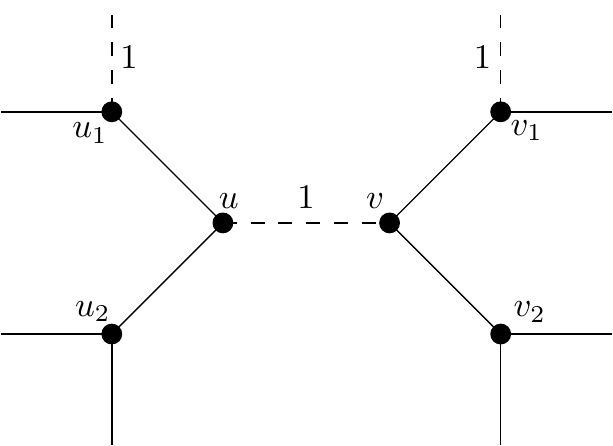}
		$$
		\caption{The edge $uv$ is the bad middle edge
			(the three edges inducing it are indicated with the dashed lines).}
		\label{fig:uv}
	\end{figure}	
	
	Clearly, each of the seven remaining colors must appear in $N^2(uv)$,
	otherwise we recolor $uv$ with the one not appearing in $N^2(uv)$ and thus decrease $\iota_6(\sigma)$.
	Moreover, if some color $\alpha$ appears at most once at distance $2$ from $uv$,
	then we recolor $uv$ with $\alpha$, decrease the number of bad middle edges of color $1$ by at least $1$
	and increase the number of bad middle edges of color $\alpha$ by at most $1$.
	However, while we decrease the number of bad pairs of color $1$ by at least $2$,
	we only increase the number of bad pairs of color $\alpha$ by $1$,
	hence violating the minimality of $\iota_6(\sigma)$ and $\iota_4(\sigma)$.
	Therefore, by a simple counting argument, we infer that the four edges at distance $1$ from $uv$
	must all obtain distinct colors, and the colors on the edges at distance $2$ from $uv$ 
	are different from colors at distance $1$ and
	appear in pairs; consequently, $|N^2(uv) \setminus N(uv)| = 8$.
	
	Suppose now that there is a color, say $6$, on the edges at distance $2$ from $uv$
	appearing at the vertices $u_1$ and $u_2$. 
	By the above argument, we may assume that the edges are colored as in Figure~\ref{fig:uv1}
	and $uv$ cannot be recolored with another color without increasing $\iota_6(\sigma)$ 
	or retaining the value of $\iota_6(\sigma)$ and increasing $\iota_4(\sigma)$.
	\begin{figure}[htp!]
		$$
			\includegraphics{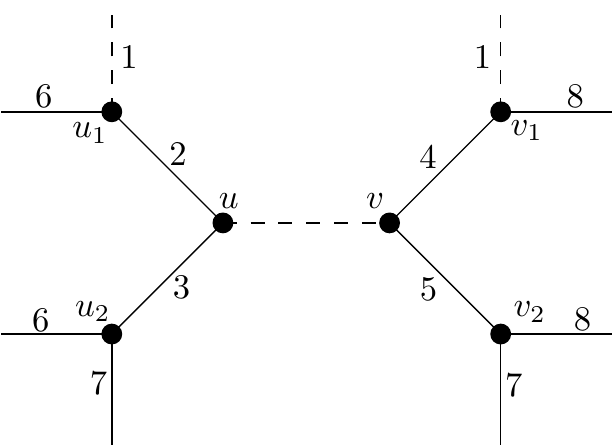}
		$$
		\caption{The edge $uv$ is the bad middle edge and there are two edges at distance $2$ of the same color on one side.}
		\label{fig:uv1}
	\end{figure}		
	This means that there are two edges of color $6$ in $N^2(vv_1)$, 
	otherwise we can recolor $vv_1$ with $6$ and $uv$ with $4$ 
	obtaining a coloring $\sigma'$ with either $\iota_6(\sigma') < \iota_6(\sigma)$ or $\iota_6(\sigma') = \iota_6(\sigma)$ and $\iota_4(\sigma') < \iota_4(\sigma)$.
	Analogously, there are edges of colors $2$ and $3$ in $N_{v_1}^2(vv_1)$.
	But then, we can recolor $vv_1$ with $7$ and $uv$ with $4$, 
	again decreasing $\iota_6(\sigma)$ or retaining the value of $\iota_6(\sigma)$ and decreasing $\iota_4(\sigma)$, a contradiction.
	
	Therefore, we may assume that there are three colors, 
	which, by recoloring $uv$, induce a $P_6$ with $uv$ being the bad middle edge, as depicted in Figure~\ref{fig:sub1}.
	Note that there are precisely two non-isomorphic colorings of the $2$-edge-neighborhood of $uv$, 
	but our argument is analogous for both of them.
	\begin{figure}[htp!]
		$$
			\includegraphics{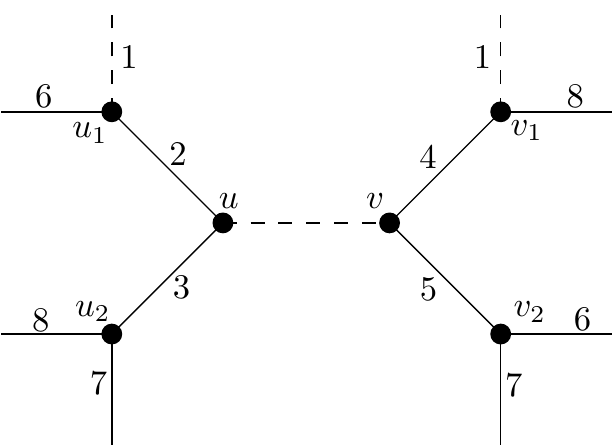}
		$$
		\caption{The edge $uv$ is the bad middle edge and no two edges at distance $2$ of the same color are on one side.}
		\label{fig:sub1}
	\end{figure}
	
	We continue by considering the colors of the edges at distance $3$ from $uv$.
	\begin{claim}
		\label{cl:sub_1n}
		For every edge $xy$ adjacent to $uv$, where $x \in \set{u,v}$, we have 
		$$
			|\sigma(\set{xy} \cup N_y^2(xy))| = 7
		$$
		and
		$$
			\sigma(xz) \notin \sigma(\set{xy} \cup N_y^2(xy)),
		$$
		where $z \notin \set{u,v,y}$.
	\end{claim}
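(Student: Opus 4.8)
The plan is to prove the claim by contradiction, exploiting the extremal choice of $\sigma$: recall that $\sigma$ was chosen to first minimize the number $\iota_6(\sigma)$ of bad middle edges and, subject to that, the number $\iota_4(\sigma)$ of bad pairs. I would show that if either conclusion fails for some edge adjacent to $uv$, then one can recolor two edges so as to either strictly decrease $\iota_6(\sigma)$, or keep it unchanged while strictly decreasing $\iota_4(\sigma)$, contradicting minimality. By the symmetry of the configuration in \cref{fig:sub1} it suffices to treat one edge, say $xy = vv_1$; here $z = v_2$, so $\sigma(xz) = \sigma(vv_2)$.

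First I would record the relevant local data. By the analysis preceding the claim, the four edges at distance $1$ from $uv$ carry four distinct colors, and the eight edges of $N^2(uv)\setminus N(uv)$ realize the four remaining colors, each appearing exactly once on the $u$-side and once on the $v$-side. From this I read off the set $D$ of colors occurring at distance $2$ from $vv_1$ on the $v$-side, namely the two colors of $\set{uu_1, uu_2}$ together with the two colors of the edges at $v_2$; thus $|D| = 4$, and, crucially, both $\sigma(vv_1) \notin D$ and $\sigma(vv_2) \notin D$. By \cref{cl:3,cl:4} the vertices involved are distinct and non-adjacent, so the edges of $\set{vv_1}\cup N_{v_1}^2(vv_1)$ genuinely lie at the stated distances, with the edges incident with the two neighbors of $v_1$ distinct from $v$ sitting at distance $3$ from $uv$.

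The heart of the argument is the following recoloring. Because $\sigma(vv_1)$ does not occur at distance $2$ from $uv$, assigning $\sigma(vv_1)$ to $uv$ keeps $uv$ proper and, more importantly, makes it not a bad middle edge. Having freed $uv$ from its old color, I recolor $vv_1$ with a color $\gamma$; checking properness against $uv$, $vv_2$, and the two edges at $v_1$ shows that the admissible choices for $\gamma$ are exactly the colors of $D$. Moreover, $vv_1 = \gamma$ becomes a bad middle edge precisely when $\gamma$ also occurs at distance $2$ on the $v_1$-side, that is, among the edges incident with the neighbors of $v_1$ — exactly the distance-$3$ edges. Hence, if $D$ is not entirely covered by those distance-$3$ colors, I can pick $\gamma \in D$ avoiding them, so that after recoloring neither $uv$ nor $vv_1$ is a bad middle edge. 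A short check shows that the existence of such a $\gamma$ is equivalent to the failure of $|\sigma(\set{vv_1}\cup N_{v_1}^2(vv_1))| = 7$ together with $\sigma(vv_2) \notin \sigma(\set{vv_1}\cup N_{v_1}^2(vv_1))$, which is exactly the negation of the claim.

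It remains to verify that this recoloring improves the objective, and this is the main obstacle: the new color on $vv_1$ coincides with exactly one edge $f$ at distance $2$ on the $v$-side, and whether $f$ turns into a new bad middle edge depends on colors at distance $3$–$4$ from $uv$ that are not controlled locally. Rather than exclude this, I would appeal to the lexicographic objective. Removing its old color from $uv$ destroys at least two bad pairs (every bad middle edge lies in at least two); the color now placed on $uv$ creates no new bad pair since it is absent at distance $2$ from $uv$; and recoloring $vv_1$ to $\gamma$ creates at most the single bad pair $\set{vv_1, f}$. Hence $\iota_4$ drops by at least one. At the same time $uv$ leaves the set of bad middle edges while at most $f$ enters it, so $\iota_6$ does not increase. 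If $\iota_6$ strictly decreases we contradict its minimality; otherwise $\iota_6$ is unchanged and the strict decrease of $\iota_4$ contradicts the secondary minimality. Either way we reach a contradiction, establishing the claim for $vv_1$ and, by symmetry, for all four edges adjacent to $uv$.
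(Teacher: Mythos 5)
Your proposal is correct and takes essentially the same route as the paper's own proof: the paper likewise recolors $uv$ with the old color of the adjacent edge $xy$ and gives $xy$ a color missing from $\sigma(N_y^2(xy))$, then invokes the lexicographic minimality of $(\iota_6(\sigma),\iota_4(\sigma))$, noting that at least two bad pairs at $uv$ are destroyed while at most one new bad pair (and at most one new bad middle edge) is created. The only cosmetic difference is that the paper argues color-by-color, showing each of the four candidate colors must appear beyond $y$, whereas you negate the claim once and pick a single missing color $\gamma$; the recoloring and the accounting are identical.
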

	
	\begin{proofclaim}	
		We start by considering the edge $uu_2$ (we label the vertices as depicted in Figure~\ref{fig:sub2}).
		There already are colors $7$ and $8$ in $N_{u_2}(uu_2)$.
		\begin{figure}[htp!]
			$$
				\includegraphics{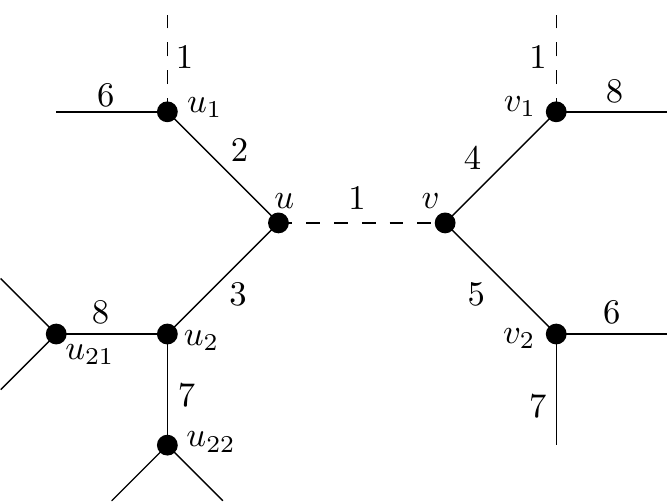}
			$$
			\caption{The neighborhood $N_{u_2}^2(uu_2)$ of $uu_2$ consists of six distinct colors.}
			\label{fig:sub2}
		\end{figure}				
		If no edge from $N_{u_2}^2(uu_2)$ is colored with $1$, 
		then we can set $\sigma(uu_2)=1$ and $\sigma(uv)=3$, 
		hence decreasing $\iota_6(\sigma)$ by at least $1$. 
		So, we may assume that there is an edge colored with $1$ in $N_{u_2}^2(uu_2)$.		
		Similarly, there is an edge colored with $6$ in $N_{u_2}^2(uu_2)$, for otherwise we set $\sigma(uu_2)=6$ and $\sigma(uv)=3$, 
		decreasing $\iota_4(\sigma)$ by $1$, and retaining or decreasing $\iota_6(\sigma)$ as we lose one bad middle edge of color $1$
		and introduce at most one bad middle edge of color $6$. 
		An analogous argument implies that also $4,5 \in N_{u_2}^2(uu_2)$ and hence 
		$$
			\sigma(N_{u_2}^2(uu_2) \setminus N_{u_2}(uu_2)) = \set{1,4,5,6}
		$$		
		Note that the above argumentation works regardless if all the edges in $N_{u_2}^2(uu_2)$ are distinct 
		from the edges in $N^2(uv)$ or not (none of them is in $N(uv)$ due to the girth condition), 
		since we only recolor the edge $uv$ with $3$, which does not appear in $N^2(uv)$, 
		and recolor the edge $uu_2$ with a color which does not appear in $N_{u_2}^2(uu_2)$.
		
		Furthermore, by the symmetry, we also have that 
		\begin{align*}
			\sigma(N_{u_1}^2(uu_1) \setminus N_{u_1}(uu_1)) &= \set{4,5,7,8}, \\			
			\sigma(N_{v_1}^2(vv_1) \setminus N_{v_1}(vv_1)) &= \set{2,3,6,7}, \\
			\sigma(N_{v_2}^2(vv_2) \setminus N_{v_2}(vv_2)) &= \set{1,2,3,8}.
		\end{align*}
		This establishes the claim.
	\end{proofclaim}	
		
	We continue with an analysis of a possible arrangement of colors also on the edges at distance $3$ from $uv$ (depicted in Figure~\ref{fig:sub5}).
	Note that the colors need not be all distinct.
	\begin{figure}[htp!]
		$$
			\includegraphics{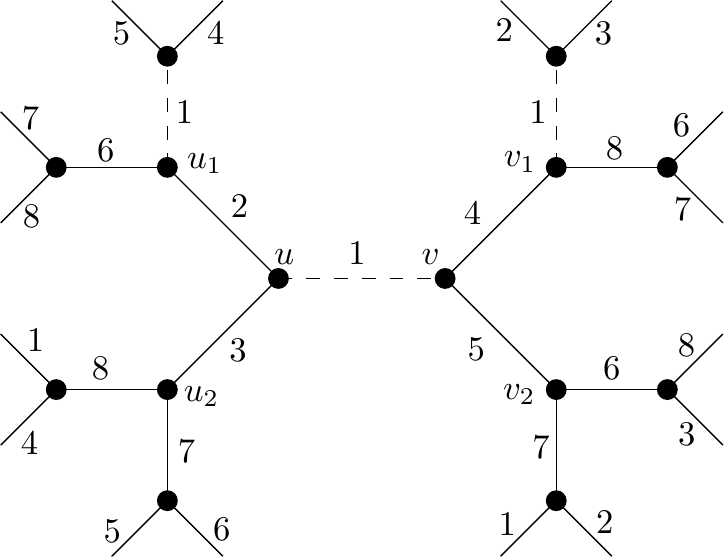}
		$$
		\caption{Possible neighborhoods of the vertices $u_1$, $v_1$, $u_2$, and $v_2$.}
		\label{fig:sub5}
	\end{figure}			
	
	In the next claim, we show that using a similar argument as in the proof of Claim~\ref{cl:sub_1n}, 
	we can determine the colors in $2$-edge-neighborhoods of the edges at distance more than $2$ from $uv$.

	\begin{claim}
		\label{cl:sub_2}
		Let $w_0w_1\dots w_k$ be an induced path on $k+1$ vertices in $G$, for some integer $k \ge 2$, 
		with $w_0 = v$ and $w_1 = u$ (or $w_0 = u$ and $w_1 = v$). 
		Then for every $j$, $2 \le j \le k$, we have that $d(w_j) = 3$ 
		and $\sigma(N^2_{w_j}(w_{j-1}w_j)) = [8] \setminus \set{\sigma(w_{j-1}w_j), \sigma(w_{j-1}x_{j-1})}$,
		where $x_{j-1}$ is the neighbor of $w_{j-1}$ distinct from $w_{j-2}$ and $w_j$.
	\end{claim}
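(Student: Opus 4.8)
The plan is to induct on $j$, the base case $j=2$ being exactly Claim~\ref{cl:sub_1n} (there $w_2\in\set{u_1,u_2}$ and $x_1$ is the remaining neighbor of $u=w_1$). Fix $j\ge 2$, abbreviate $e_i=w_{i-1}w_i$ and $f_{i-1}=w_{i-1}x_{i-1}$, and assume the statement at $j$: $d(w_j)=3$ and $\sigma(N^2_{w_j}(e_j))=[8]\setminus\set{\sigma(e_j),\sigma(f_{j-1})}$. The set $N^2_{w_j}(e_j)$ is made up of the two edges $e_{j+1}=w_jw_{j+1}$ and $f_j=w_jx_j$ at $w_j$ together with the forward edges at $w_{j+1}$ and at $x_j$; as it carries six \emph{distinct} colors while containing at most six edges, all six edges are present and distinctly colored. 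In particular $d(w_{j+1})=3$, which is the degree assertion at $j+1$, and writing $c_1,c_2$ for the colors of the two forward edges at $w_{j+1}$ and $d_1,d_2$ for those at $x_j$, and setting $a=\sigma(e_{j+1})$, $b=\sigma(f_j)$, we get $\set{a,b,c_1,c_2,d_1,d_2}=[8]\setminus\set{\sigma(e_j),\sigma(f_{j-1})}$; in particular $c_1,c_2\in[8]\setminus\set{a,b}$.

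Now $N^2_{w_{j+1}}(e_{j+1})$ consists of $e_{j+2},f_{j+1}$ (colors $c_1,c_2$) together with the four forward edges at $w_{j+2},x_{j+1}$, and the target is $\sigma(N^2_{w_{j+1}}(e_{j+1}))=[8]\setminus\set{a,b}$. Since $c_1,c_2$ already lie there, the step reduces to showing that these four forward edges realise precisely the four colors $[8]\setminus\set{a,b,c_1,c_2}=\set{d_1,d_2,\sigma(e_j),\sigma(f_{j-1})}$ and that neither $a$ nor $b$ recurs among them; equivalently, that $N^2_{w_{j+1}}(e_{j+1})$ again carries six distinct colors, none equal to $a$ or $b$ --- structurally the very statement proved at $w_j$. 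I would establish this by recoloring arguments modelled on Claim~\ref{cl:sub_1n}. The decisive difference is that $e_j$, unlike the root edge $uv$, is \emph{not} a bad middle edge: by induction $\sigma(e_j)$ is absent from its own forward cone $N^2_{w_j}(e_j)$. Hence a recoloring confined near $w_{j+1}$ leaves $\iota_6$ untouched and must instead decrease $\iota_4$. Concretely, any deviation from the claimed pattern forces a repeated color in $N^2_{w_{j+1}}(e_{j+1})$, or an occurrence of $a$ (respectively $b$) on a forward edge, which is a bad pair with $e_{j+1}$ (respectively with $f_j$); I would remove it by recoloring $e_{j+1}$ and sliding its color onto $e_j$, a swap that is legal because the induction hypothesis applied at $w_{j-1}$ pins the colors $\sigma(e_{j-1}),\sigma(f_{j-1})$ and so certifies $a\ne\sigma(e_{j-1})$. \cref{thm:hall} then recolors the handful of affected edges; the finitely many local configurations to check are those indicated in Figure~\ref{fig:sub5}.

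The step I expect to be the main obstacle is the net bookkeeping of $\iota_4$. Breaking one bad pair by recoloring $e_{j+1}$ alone tends to create a compensating one, since the four complementary colors $d_1,d_2,\sigma(e_j),\sigma(f_{j-1})$ already appear at distance $2$ from $e_{j+1}$ on its back side; it is precisely the paired recoloring of $e_j$, together with the girth-$5$ condition, that must be leveraged to secure a strict decrease of $\iota_4$ with no new violation introduced off the path. At the very first step ($j=2$, where $e_{j-1}=uv$) the same slide may instead collide with the genuine bad middle edge and decrease $\iota_6$ directly. Checking that every case yields a lexicographic drop of $(\iota_6,\iota_4)$ is the delicate part, and it is here that the full color-by-color rigidity furnished by the induction hypothesis is indispensable.
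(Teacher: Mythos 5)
Your setup is sound: the forward induction with base case Claim~\ref{cl:sub_1n}, the deduction of $d(w_{j+1})=3$ from the six distinct colors in $N^2_{w_j}(e_j)$, and the arithmetic reducing the step to the four outer edges are all correct, and they match the paper's framing (the paper phrases it as a least $k$ for which some induced path fails). The inductive step itself, however, has two genuine gaps. First, the dichotomy you rely on is false: failure of the claim at $j+1$ means only that some color of $[8]\setminus\set{a,b}$ is absent from $\sigma(N^2_{w_{j+1}}(e_{j+1}))$, and this can happen with no bad pair anywhere in sight --- for instance when a second-layer vertex ($w_{j+2}$ or $x_{j+1}$) has degree $2$, or when an outer edge at $w_{j+2}$ and an outer edge at $x_{j+1}$ share a color (by girth $5$ such a pair is at edge-distance exactly $3$, not $2$); similarly $b$ occurring on an outer edge at $w_{j+2}$ is at distance $3$ from $f_j$. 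In all these cases there is nothing local to destroy: by the induction hypothesis the only bad pairs meeting the path involve the root $uv$, so a recoloring ``confined near $w_{j+1}$'' can only preserve or increase $(\iota_6,\iota_4)$, and minimality of $\sigma$ is a global property --- local suboptimality of this kind gives no contradiction. Second, the swap itself is not certified: the induction hypothesis pins $a=\sigma(e_{j+1})$ away from $\sigma(e_{j-1})$, $\sigma(f_{j-1})$, $\sigma(f_{j-2})$ and the colors at $x_{j-1}$, but says nothing about $\sigma(e_{j-2})$, which sits at distance $2$ from $e_j$. Indeed your own color count shows that $\sigma(e_j)$ and $\sigma(f_{j-1})$ must recur among the outer edges of the next cone, i.e., colors repeat along the path with period three (compare Claim~\ref{cl:sub_1n}, where the color $1$ of $uv$ reappears in $N^2_{u_2}(uu_2)$), so $a=\sigma(e_{j-2})$ is entirely typical and sliding $a$ onto $e_j$ creates exactly the compensating bad pair you identify as ``the main obstacle.'' You leave that obstacle open, and no purely local repair closes it.

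The paper resolves both problems by abandoning locality. Taking the least $k$ for which the claim fails and a color $\alpha\in[8]\setminus\set{\sigma(w_{k-1}w_k),\sigma(w_{k-1}x_{k-1})}$ absent from $\sigma(N^2_{w_k}(w_{k-1}w_k))$, it recolors the \emph{entire} path by a shift: $w_{i-1}w_i$ receives $\sigma(w_iw_{i+1})$ for every $i\in\set{1,\dots,k-1}$, and $w_{k-1}w_k$ receives $\alpha$. Because each interior edge inherits the color of its successor, the induction hypothesis (valid along the whole path by minimality of $k$) certifies that no new conflict arises in the interior --- precisely the certification your two-edge swap lacks --- and, since the path is induced, at most one new bad pair can appear, necessarily at the far end. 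The profit is collected at the root: $w_0w_1=uv$ sheds its color $1$ and with it at least two bad pairs, and at most one new bad middle edge is created, so $\iota_6(\sigma')\le\iota_6(\sigma)$ while $\iota_4(\sigma')<\iota_4(\sigma)$, contradicting the lexicographic choice of $\sigma$. The idea missing from your proposal is that the contradiction must be charged to the bad middle edge $uv$, the only place where bad pairs are guaranteed to exist; the induced path is the conduit transporting the free color $\alpha$ from the point of failure back to the root.
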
	

	\begin{proofclaim}	
		Note that the case for $k=2$ already follows from Claim~\ref{cl:sub_1n}.
		
		We proceed by contradiction. 
		Let $k \ge 3$ be the least integer such that there is an induced path 
		$P = w_0w_1\dots w_k$ in $G$, for which the claim does not hold. 
		Hence, for every $j$, $2 \le j \le k-1$, we have $d(w_{j-1}) = d(w_j) = 3$ and 
		$\sigma(N^2_{w_j}(w_{j-1}w_j)) = [8] \setminus \{\sigma(w_{j-1}w_j), \sigma(w_{j-1}x_{j-1})\}$.
		
		Suppose now that there is a color 
		$\alpha\in [8] \setminus \{\sigma(w_{k-1}w_k), \sigma(w_{k-1}x_{k-1})\}$ 
		which is not in $\sigma(N^2_{w_k}(w_{k-1}w_k))$. 
		Then, for every $j \in \{1,\dots,k-1\}$, we recolor $w_{j-1}w_j$ with $\sigma(w_jw_{j+1})$,
		and finally we recolor $w_{k-1}w_k$ with $\alpha$, obtaining a proper edge-coloring $\sigma'$. 
		As the path $P$ is induced, there is no edge outside of $P$ joining two vertices of $P$. 
		If after recoloring, we introduce a new pair of edges at distance $2$ 
		colored with the same color (note that at most one such pair may appear), 
		then exactly one of these edges belongs to $P$ or it is the pair $w_{k-1}w_k$, $w_{k-3}w_{k-2}$. 
		
		After the recoloring, the edge $w_0w_1$ is not a bad middle edge anymore,
		and at most one new bad middle edge $e'$ was created; 
		$e'$ (if it exists) is either incident with $x_{k-1}$ or $w_{k-2}$. 
		Thus, $\iota_6(\sigma') \le \iota_6(\sigma)$. 
		Moreover, $\iota_4(\sigma') < \iota_4(\sigma)$, 
		since exactly one new pair ($w_kw_{k-1}$ and $e'$) was created, 
		and two pairs (both with the edge $w_0w_1$) were destroyed, a contradiction.
	\end{proofclaim}

%
%

	We finalize the proof of the theorem by recoloring some of the edges of $G$ 
	in order to obtain a contradiction in terms of the assumptions on $\sigma$.
	Consider the labeling of the vertices as depicted in Figure~\ref{fig:subFinA}.
	\begin{figure}[htp!]
		$$
			\includegraphics{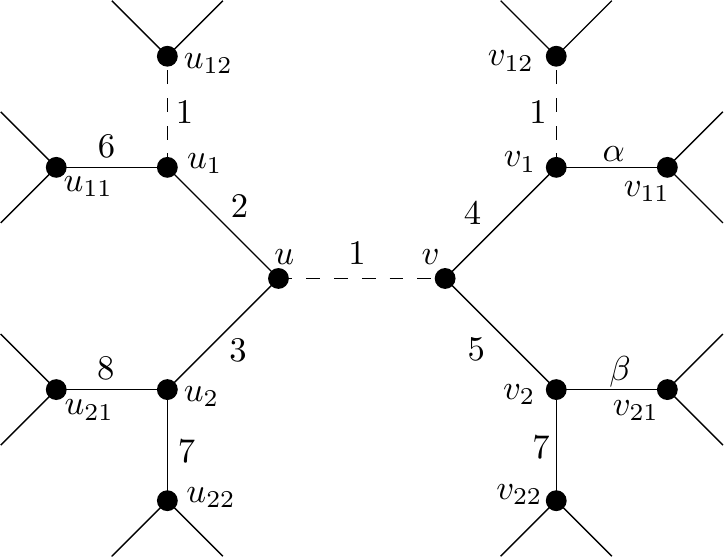}
		$$
		\caption{Colors of the edges around $uv$ before the final step of the proof.}
		\label{fig:subFinA}
	\end{figure}
	Without loss of generality, we may also assume the coloring of the edges as given in the figure,
	where $\set{\alpha,\beta} = \set{6,8}$.

	We first establish some additional properties of $G$.
	\begin{claim}
		\label{cl:distance}
		$d(u,v_{22}) \ge 3$, $d(u,v_{21}) \ge 3$, and $d(v,u_{22}) \ge 3$.
	\end{claim}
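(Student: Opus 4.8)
The plan is to establish each of the three inequalities by excluding distances $0$, $1$, and $2$ separately: the first two are handled by the girth, and distance~$2$ will be ruled out by feeding a suitable induced path on four vertices into \cref{cl:sub_2}. Since $G$ has girth at least~$5$, neither $v_{21}$ nor $v_{22}$ can equal $u$ (this would create the triangle $uvv_2$) or be adjacent to $u$ (this would create a $4$-cycle through $v$ and $v_2$); symmetrically, $u_{22}$ is neither $v$ nor a neighbor of $v$. Hence in each case it remains to exclude distance exactly~$2$.

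Before doing so I would record the palette forced by \cref{cl:sub_1n}. A direct reading of its four neighborhood sets pins down the colors of the eight edges at distance~$2$ from $uv$: the two edges incident with $u_1$ (other than $uu_1$) carry colors $\set{1,6}$, those incident with $u_2$ carry $\set{7,8}$, those incident with $v_1$ carry $\set{1,8}$, and those incident with $v_2$ carry $\set{6,7}$. The key consequence is that these colors pair the two sides \emph{completely}: for all $i,j \in \set{1,2}$ the two edges incident with $u_i$ (other than $uu_i$) and the two edges incident with $v_j$ (other than $vv_j$) have a color in common.

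Now suppose $d(u,v_{2i})=2$ for some $i$, and let $w$ be a common neighbor of $u$ and $v_{2i}$. As $w\in N(u)=\set{v,u_1,u_2}$ and $w=v$ would force the triangle $vv_2v_{2i}$, we have $w\in\set{u_1,u_2}$. The path $v-u-w-v_{2i}$ is then induced, since its only candidate chords $uv_{2i}$ and $vv_{2i}$ were excluded above, so \cref{cl:sub_2} gives
$$
	\sigma\big(N^2_{v_{2i}}(wv_{2i})\big) = [8]\setminus\set{\sigma(wv_{2i}),\,\sigma(wx)},
$$
where the two removed colors are exactly those on the two edges incident with $w$ other than $uw$. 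On the other hand $v_{2i}$ is adjacent to $v_2$, so \emph{both} edges incident with $v_2$ other than $vv_2$ lie in $N^2_{v_{2i}}(wv_{2i})$ (one shares the vertex $v_{2i}$, the other sits at distance~$2$ through $v_{2i}$ and $v_2$); hence both colors at $v_2$, namely $\set{6,7}$, occur in $\sigma(N^2_{v_{2i}}(wv_{2i}))$. By the complete pairing the colors at $v_2$ meet the colors at $w$, so some color is simultaneously present in and excluded from $\sigma(N^2_{v_{2i}}(wv_{2i}))$, a contradiction. The argument for $d(v,u_{22})=2$ is identical after interchanging $u$ and $v$: the common neighbor lies in $\set{v_1,v_2}$, and the two edges dragged into the neighborhood are the edges incident with $u_2$, whose colors $\set{7,8}$ again meet the excluded set by the complete pairing.

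The only real content is the second paragraph: I must verify that the sets of \cref{cl:sub_1n} genuinely force the completely-paired palette, and I should check that the reasoning is insensitive to the two non-isomorphic local colorings of \cref{fig:sub1}. Since the pairing is the same up to relabeling, the final color clash persists in both; everything else is routine once \cref{cl:sub_2} is available.
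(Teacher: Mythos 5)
Your overall strategy (girth rules out distances $0$ and $1$; \cref{cl:sub_2} applied to the induced path $v$--$u$--$w$--$v_{2i}$ rules out distance $2$) is sound, and where it works it is actually cleaner than the paper's own argument: the paper's case $v_{22}=u_{12}$ needs a genuine recoloring step, which your use of \cref{cl:sub_2} bypasses. But there is a real gap at exactly the point you flagged and then waved off. The ``complete pairing'' is \emph{not} invariant over the two non-isomorphic colorings of the $2$-edge-neighborhood of $uv$. The palette you read off ($u_1\colon\set{1,6}$, $u_2\colon\set{7,8}$, $v_1\colon\set{1,8}$, $v_2\colon\set{6,7}$) is only the ``crossed'' configuration, the one drawn in the figures; this is precisely why the paper carries the parametrization $\set{\alpha,\beta}=\set{6,8}$ into \cref{fig:subFinA}. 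In the other configuration the partition of $\set{1,6,7,8}$ on the $u$-side coincides with the one on the $v$-side, i.e.\ $u_1\colon\set{1,6}$, $u_2\colon\set{7,8}$, $v_1\colon\set{1,6}$, $v_2\colon\set{7,8}$. There the colors at $u_1$ are disjoint from those at $v_2$, and the colors at $u_2$ are disjoint from those at $v_1$ --- and these are exactly the pairs your contradiction needs: for $d(u,v_{2i})=2$ with common neighbor $w=u_1$, and for $d(v,u_{22})=2$ with common neighbor $v_1$, no color is ``simultaneously present in and excluded from'' the relevant neighborhood, and your proof stops. Equal versus crossed partitions is an isomorphism invariant, so the assertion that ``the pairing is the same up to relabeling'' is simply false.

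The gap is repairable, but it requires an argument you did not supply. In the equal-partition configuration with $w=u_1$, apply \cref{cl:sub_1n} twice: the third edge at the alleged common vertex $v_{2i}$ (which exists, since \cref{cl:sub_2} also gives $d(v_{2i})=3$) lies at distance $2$ from $uu_1$, so its color belongs to $\set{4,5,7,8}$, and at distance $2$ from $vv_2$, so its color belongs to $\set{1,2,3,6}$; these sets are disjoint, a contradiction. A symmetric argument handles $u_{22}$ adjacent to $v_1$. This is in essence the paper's case $v_{22}=u_{11}$. As written, however, your proof covers only one of the two admissible local colorings for two of the possible attachments, so it is incomplete.
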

	\begin{proofclaim}
		Suppose, to the contrary, that $d(u,v_{22}) < 3$.
		Then, since $G$ has girth at least $5$, we have $d(u,v_{22}) = 2$,
		meaning that $v_{22}$ is adjacent to $u_1$ or $u_2$;
		namely $v_{22} = u_{11}$, $v_{22} = u_{12}$, $v_{22} = u_{21}$, or $v_{22} = u_{22}$.
		We consider each of the four cases separately.
		
		Suppose first that $v_{22} = u_{11}$.
		Then, by Claim~\ref{cl:sub_1n} with $w_k = v_2$, the vertex $v_{22}$ is not incident with edges of color $4$ or $5$.
		Thus, by Claim~\ref{cl:sub_1n} with $w_k = u_1$, $C(v_{22}) = \set{6,7,8}$.
		But this is a contradiction with Claim~\ref{cl:sub_1n} with $w_k = v_2$, since $\beta \in \set{6,8}$.
		
		Next, suppose that $v_{22} = u_{12}$.
		Let $v_{22}'$ be the neighbor of $v_{22}$ distinct from $u_1$ and $v_2$.
		By Claim~\ref{cl:sub_1n} applied twice, with $w_k = v_2$ and $w_k = u_1$, we infer that $\sigma(v_{22}v_{22}') = 8$ 
		and consequently $\beta = 6$.
		If $3 \notin C(v_{22}')$, then by Claim~\ref{cl:sub_1n} with $w_k = u_2$,
		inferring that there is no edge of color $3$ in $N^2(uu_2)$,
		we may recolor $u_1v_{22}$ with $3$ and so decrease $\iota_6(\sigma)$ by $1$. 
		Therefore, $3 \in C(v_{22}')$, meaning that at least one of the colors $4$ and $5$ is not in $C(v_{22}')$.
		Without loss of generality, we may assume $5 \notin C(v_{22}')$ (otherwise we swap the colors of $vv_1$ and $vv_2$).
		Now, we set $\sigma(uv) = \sigma(v_2v_{22}) = 5$, $\sigma(vv_2) = 1$, and $\sigma(u_1v_{22}) = 7$.
		Note that such a recoloring reduces $\iota_6(\sigma)$ by $1$, since colors $5$ and $7$ do not introduce 
		any new $P_6$s, a contradiction.
		
		Suppose now that $v_{22} = u_{21}$.
		Then, by Claim~\ref{cl:sub_1n} with $w_k = u_2$, $\sigma(N_{u_2}^2(uu_2) \setminus N_{u_2}(uu_2)) = \set{1,4,5,6}$,
		but this is not possible, since $\sigma(v_2v_{22}) = 7 \in C(N_{u_2}^2(uu_2) \setminus N_{u_2}(uu_2))$, a contradiction.
		
		Finally, suppose that $v_{22} = u_{22}$. 
		In this case, $\sigma(u_2v_{22}) = \sigma(v_2v_{22})$ violating the assumption that $\sigma$ is a proper edge-coloring, a contradiction.
		
		An analogous reasoning can be used to prove that $d(u,v_{21}) \ge 3$ and $d(v,u_{22}) \ge 3$.
		This establishes the claim.		
	\end{proofclaim}
	
	Now, recolor 
	$uv$ with $7$,
	$u_2u_{22}$ and $vv_2$ with $3$, and	
	$uu_{2}$ with $5$.
	There is one edge, $v_2v_{22}$, colored with $7$ in $N^2(uv)$, 
	and, by Claim~\ref{cl:sub_2}, the only edge colored with $7$ in $N^2(v_2v_{22})$ is $uv$.
	By Claim~\ref{cl:sub_1n}, there is one edge $e_1=xy$ colored with $5$ in $N^2(uu_2)$, 
	where $x \in \set{u_{21},u_{22}}$.
	Note that $y \notin \set{v_{11},v_{12},v_{21},v_{22}}$, by Claim~\ref{cl:sub_1n} applied twice, with $w_k = v_1$ and $w_k = v_2$.
	This means that $d(v,y) \ge 3$,
	and thus, by Claim~\ref{cl:sub_2}, the only edge of color $5$ in $N^2(e_1)$ is $uu_2$.
	Finally, by Claim~\ref{cl:sub_1n} with $w_k = v_2$, there is one edge $e_2$ colored with $3$ in $N_{v_2}^2(vv_{2})$,
	and by Claims~\ref{cl:sub_2} and~\ref{cl:distance} there is one edge $e_3$ colored with $3$ in $N_{u_{22}}^2(u_2u_{22})$.
	By Claim~\ref{cl:sub_2}, none of $e_2$ and $e_3$ is a bad middle edge (and we are done), unless $e_2 = e_3$.
	Thus, for the rest of the proof, assume that $e_2 = e_3$ and $\sigma(e_2) = 3$.
	We may assume that $e_2$ is incident with $v_{22}$ (the case when $e_2$ is incident with $v_{21}$ proceeds with an analogous reasoning), 
	and so let $e_2 = v_{22}z_1$ (see Figure~\ref{fig:subFinB}).
	\begin{figure}[htp!]
		$$
			\includegraphics{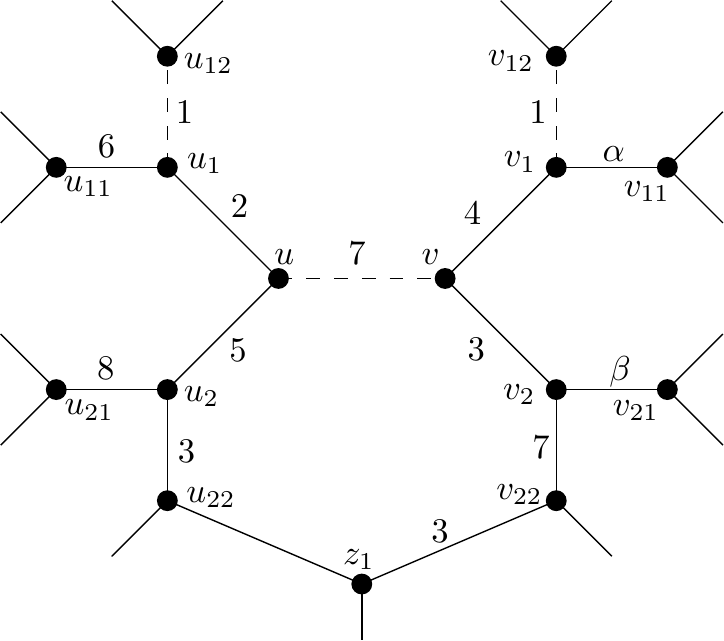}
		$$
		\caption{Colors of the edges around $uv$ after recoloring the four edges and $e_2$ being a bad middle edge.}
		\label{fig:subFinB}
	\end{figure}
	
	In this case, we additionally recolor $u_2u_{21}$ with $3$
	and $u_2u_{22}$ with $8$.
	Note that by Claim~\ref{cl:sub_2}, there is no edge of color $8$ at distance $2$ from $u_2u_{22}$,
	and there is an edge $e_4$ of color $3$ in $N_{u_{21}}^2(u_2u_{21})$.
	By Claim~\ref{cl:sub_1n} and since $e_2$ is incident with $v_{22}$, $3 \notin C(v_{21})$,
	and so $e_4 \ne vv_2$.
	Therefore, we are done, unless $e_4=x'y'$ is a bad middle edge.
	But in this case, the path $vuu_2u_{21}x'y'$ is induced,
	meaning that by Claim~\ref{cl:sub_2} with $w_k = y'$, there is no edge of color $3$ 
	in $N_{y'}^2(e_4)$.
	
	This establishes the proof.
\end{proof}

\section{Conclusion}
\label{sec:con}

We believe that the bounds presented in this paper, although tight, can be improved.
In particular, we are not aware of any family of connected graphs $G$, other than the complete bipartite graphs,
that would attain the bound $\chiss{G} = \Delta(G)^2$.
Therefore, we propose the following.
\begin{conjecture}
	For every connected $G$, distinct from $K_{n,n}$, it holds that
	$$
		\chiss{G} \le \Delta(G)^2 - 1\,.
	$$
\end{conjecture}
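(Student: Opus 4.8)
The plan is to refine the conflict-minimisation scheme behind \cref{thm:s01} so that it runs with only $\Delta(G)^2-1$ colors, and to show that the single obstruction surviving this tightening forces a whole component to be $K_{n,n}$. Fix $\Delta=\Delta(G)$ and work with the palette $[\Delta^2-1]$. As in the proof of \cref{thm:cubic}, I would restrict attention to proper edge-colorings $\sigma$ in which every edge avoids the colors of its $4$-cycle partners $L(e)$, and then minimise lexicographically the pair $(\iota_6(\sigma),\iota_4(\sigma))$, where $\iota_4$ counts monochromatic pairs at distance $2$ and $\iota_6$ counts the \emph{bad middle edges}, i.e.\ edges $uv$ carrying a same-colored edge at distance $2$ through $u$ and another one through $v$. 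The point, already implicit in \cref{thm:cubic}, is that a color class fails to be a semistrong matching at an edge $e$ precisely when $e$ is a bad middle edge; hence $\iota_6(\sigma)=0$ is equivalent to $\sigma$ being semistrong, and the whole task reduces to driving $\iota_6$ down to $0$ on the smaller palette.

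First I would dispose of the initial step. A greedy coloring avoiding $N(e)\cup L(e)$ exists on the palette $[\Delta^2-1]$ as soon as $k(e)+\ell(e)\le \Delta^2-2$ for every edge $e$, where $k(e)=|N(e)|$ and $\ell(e)=|L(e)|$ as in \cref{thm:s01}. Since $k(e)\le 2(\Delta-1)$ and $\ell(e)\le(\Delta-1)^2$, the only way to violate this is $k(e)+\ell(e)=\Delta^2-1$, which forces $\deg u=\deg v=\Delta$ and the presence of all $(\Delta-1)^2$ cross-$4$-cycles through $e=uv$. A short argument then shows that $N(u)\setminus\set{v}$ and $N(v)\setminus\set{u}$ induce a complete bipartite graph, and that together with $u,v$ they span an induced $K_{\Delta,\Delta}$ in which every vertex already has degree $\Delta$; by connectedness this $K_{\Delta,\Delta}$ is all of $G$. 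Thus for connected $G\neq K_{n,n}$ every edge satisfies $k(e)+\ell(e)\le\Delta^2-2$ and the initial coloring exists.

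The heart of the argument is to reduce $\iota_6$. Assume $\iota_6(\sigma)>0$ and pick a bad middle edge $uv$ of a lexicographically optimal $\sigma$. The count from \cref{thm:s01} bounds the number of colors appearing in $N^2(uv)$ by $\tfrac12 k(uv)+\Delta(\Delta-1)$, which is at most $\Delta^2-2$ unless $\deg u=\deg v=\Delta$; in the latter case the bound is exactly $\Delta^2-1$, so no free color is guaranteed and a mere local recoloring need not help. To handle this I would generalise the path-chasing of \cref{cl:sub_1n} and \cref{cl:sub_2}: minimality of $(\iota_6,\iota_4)$ forces the $2$-edge-neighborhoods of the edges incident with $u$ and $v$ to be color-saturated, and recoloring $uv$ together with a sequence of edges along an induced path repeatedly trades one bad pair for at most one new one, strictly decreasing $\iota_4$ while not increasing $\iota_6$ — unless the saturation propagates indefinitely. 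Showing that such indefinite propagation, together with the degree-$\Delta$ saturation at every step, again pins $G$ down to $K_{n,n}$ would complete the proof.

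The main obstacle is exactly this propagation at general $\Delta$. For $\Delta=3$ the forced colors conveniently occur in pairs and the $2$-edge-neighborhood of an edge has exactly eight edges, which is what lets the explicit recolorings of \cref{thm:cubic} close up; for larger $\Delta$ the saturated color sets have size quadratic in $\Delta$ and no longer decompose so rigidly, so a purely local trade of bad pairs may fail to terminate. I expect that controlling it will require an additional global ingredient — most plausibly a discharging argument over the \emph{heavy} edges (those with both endvertices of degree $\Delta$) that quantifies how far $G$ is from $K_{n,n}$ — rather than the edge-by-edge recoloring that suffices for the $\Delta(G)^2$ bound.
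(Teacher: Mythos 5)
You should first be aware that the statement you are proving is Conjecture~1 of the paper: the authors pose it as an open problem, and the paper contains no proof for you to match. The only relevant content in the paper is the remark after \cref{thm:s01} that fewer than $\Delta(G)^2$ colors suffice whenever no edge has both endvertices of maximum degree --- which is precisely the easy half of your plan. Indeed, the parts of your proposal that you actually carry out are sound and align with that remark: the extremal analysis of $k(e)+\ell(e)=\Delta(G)^2-1$ does force $k(e)=2(\Delta(G)-1)$ and $\ell(e)=(\Delta(G)-1)^2$, hence a complete bipartite graph between $N(u)\setminus\set{v}$ and $N(v)\setminus\set{u}$ in which every vertex is degree-saturated, so that connectedness pins $G$ to $K_{\Delta,\Delta}$; and the color count $\tfrac12 k(uv)+\Delta(G)(\Delta(G)-1)\le\Delta(G)^2-2$ unless $d(u)=d(v)=\Delta(G)$ correctly isolates heavy edges as the only obstruction to recoloring on the palette $[\Delta(G)^2-1]$.

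The genuine gap is everything after that reduction, and you concede it yourself in your final paragraph. Concretely: (i) the path-chasing machinery of \cref{cl:sub_1n} and \cref{cl:sub_2} is available in \cref{thm:cubic} only after the minimal-counterexample reductions have eliminated all triangles and $4$-cycles, so the coloring lives in girth at least $5$; your scheme is a direct recoloring argument on an arbitrary graph with no such reductions, and in the presence of $4$-cycles a recoloring along a path can create a monochromatic pair in some $L(e)$, destroying the equivalence ``$\iota_6(\sigma)=0$ iff $\sigma$ is semistrong'' on which your whole framework rests --- you never explain how the lexicographic minimisation or the path recolorings preserve $L$-avoidance. (ii) The cubic proof leans on exact counts ($|N^2(uv)\setminus N(uv)|=8$, distance-$2$ colors forced to occur in pairs) that have no analogue for $\Delta(G)\ge 4$; ``saturation propagates indefinitely'' has no defined content at general $\Delta$, and the hoped-for conclusion that such propagation forces $K_{n,n}$ is in tension with the setting, since $K_{n,n}$ has girth $4$ while the propagation lemmas you want to generalize require girth at least $5$. (iii) The discharging argument over heavy edges is named but not even sketched. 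So your proposal is a plausible research program with a correct first step, not a proof; the conjecture remains open.
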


Similarly, the $5$-prism is, up to our knowledge, the only connected subcubic graph with the semistrong chromatic index $8$.
Based on that and computational verification of subcubic graphs on small number of vertices,
we also propose the next conjecture.
\begin{conjecture}
	For every connected graph $G$ with maximum degree $3$, distinct from $K_{3,3}$ and the $5$-prism, we have
	$$
		\chiss{G} \le 7\,.
	$$
\end{conjecture}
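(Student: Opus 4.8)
The plan is to adapt the minimal-counterexample method used for Theorem~\ref{thm:cubic}. Suppose $G$ is a connected subcubic graph distinct from $K_{3,3}$ and the $5$-prism with $\chiss{G}>7$ and a minimum number of edges; by Theorem~\ref{thm:cubic} we in fact have $\chiss{G}=8$. The immediate new difficulty is that minimality now yields a $7$-coloring of a proper subgraph $G'$ only \emph{provided $G'$ is neither $K_{3,3}$ nor the $5$-prism}; whenever a reduction step produces one of these two exceptional graphs we can fall back only on an $8$-coloring, so every such step must be accompanied by a separate argument ruling the exception out or coloring the corresponding $G$ directly.

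First I would re-establish the structural skeleton of $G$, namely that $G$ is $2$-connected and has girth at least $5$, by redoing the analogues of Claims~\ref{cl:multi}--\ref{cl:4}. Each of these deletion arguments loses one unit of slack when the palette shrinks from $8$ to $7$: every lower bound on $|A(e)|$ drops by one, so several of the Hall-type verifications (via \cref{thm:hall}) that were comfortable before now hold only with equality or fail outright. I therefore expect to need strengthened reducible configurations and a finer case analysis --- for instance when removing a short cycle leaves a $2$-vertex, or when the auxiliary graph $G'$ acquires parallel edges --- together with the exceptional-graph bookkeeping described above.

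Having reduced to a bridgeless subcubic graph of girth at least $5$, I would run the potential-function endgame: choose a proper edge-coloring minimizing $(\iota_6,\iota_4)$ lexicographically, where $\iota_6$ counts bad middle edges and $\iota_4$ counts bad pairs, and analyze a bad middle edge $uv$ of color $1$. The goal is to re-derive the neighborhood-forcing statements (the analogues of Claims~\ref{cl:sub_1n} and~\ref{cl:sub_2}) and then exhibit a local recoloring that strictly decreases $(\iota_6,\iota_4)$, contradicting minimality.

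The hard part will be this endgame. With only six colors other than $1$ available, the rigid picture that drove the $8$-color proof --- four distinctly colored edges at distance $1$ together with the remaining distance-$2$ colors occurring in pairs --- no longer follows from counting: four distinct colors at distance $1$ leave only two fresh colors for the (up to) eight edges at distance $2$, so colors must repeat across the two ``levels'', producing many more local patterns to dispose of. Consequently the forcing lemmas yield far less rigid structure, and the potential-decreasing recoloring must be found pattern by pattern. I expect that closing this analysis will require either sharpening the structural phase to girth at least $6$ (or to excluding specific $5$-cycle configurations, the $5$-prism being assembled entirely from $5$-cycles) or introducing additional reducible configurations tailored to the $7$-color budget; and handling the reductions that output the $5$-prism or $K_{3,3}$, while showing they do not obstruct a $7$-coloring of the larger graph, is likely to be the single most delicate point.
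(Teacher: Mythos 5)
This statement is not a theorem of the paper at all: it is an open conjecture, proposed in the conclusion on the basis of computer verification for small graphs, and the paper contains no proof of it. Your proposal does not close that gap --- it is a research plan, not a proof. Every step that would constitute the actual mathematical content is deferred with phrases like ``I expect to need strengthened reducible configurations,'' ``the potential-decreasing recoloring must be found pattern by pattern,'' and ``closing this analysis will require either sharpening the structural phase \dots or introducing additional reducible configurations.'' Identifying where the $8$-color argument breaks is diagnosis, not repair; as written, nothing is proved.

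Moreover, your own diagnosis shows why a routine adaptation fails, and this is precisely what makes the conjecture hard. Already the first reduction collapses: in the proof of Claim~\ref{cl:multi}, an uncolored parallel edge $e_1$ has $|N^2(e_1)| \le 7$, which guarantees an available color out of $8$ but possibly none out of $7$, so even multigraph reduction needs a new idea. The same one-unit loss invalidates the applications of Theorem~\ref{thm:hall} in the cycle-removal claims through Claim~\ref{cl:4}, where several unions of available-color sets meet the Hall condition with no slack to spare. Most seriously, the endgame rigidity disappears: with $8$ colors, a bad middle edge $uv$ forces four distinct colors on $N(uv)$ and the remaining colors in pairs on the eight distance-$2$ edges, which pins down the configuration in Figures~\ref{fig:uv1}--\ref{fig:sub5} and lets Claims~\ref{cl:sub_1n} and~\ref{cl:sub_2} propagate structure along induced paths; with $7$ colors the counting permits colors to repeat between $N(uv)$ and $N^2(uv)\setminus N(uv)$, so no analogue of that forcing is available, and you offer no substitute. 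Until those recoloring arguments are actually constructed --- and the exceptional outputs $K_{3,3}$ and the $5$-prism in the reductions are genuinely handled rather than flagged --- the statement remains exactly what the paper says it is: a conjecture.
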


On the other hand, there are infinitely many bridgeless subcubic graphs with the semistrong chromatic index at least $7$.
Namely, consider the graph $H$ obtained by taking two copies of $K_{2,3}$ and adding two edges 
connecting distinct $2$-vertices from each of two copies (see Figure~\ref{fig:k23x2}).
This graph contains only one semistrong matching of size at least $3$ 
consisting of two edges added between the copies of $K_{2,3}$ and one nonadjacent edge. 
For the remaining $11$ edges we need $6$ additional colors. 
Since $H$ contains two $2$-vertices, we can append it to any other bridgeless subcubic graph with at least
two $2$-vertices, hence obtaining an infinite number of graphs with the semistrong chromatic index at least $7$.
\begin{figure}[htp!]
	$$
		\includegraphics{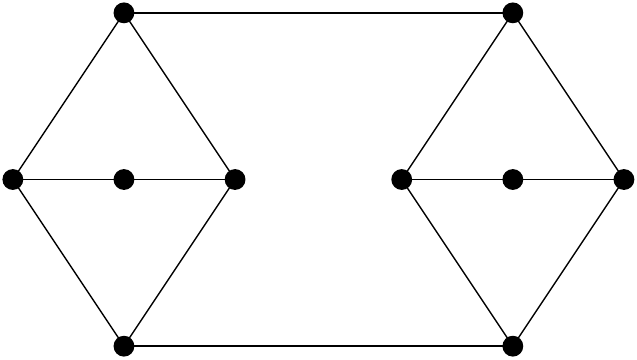}
	$$
	\caption{The graph $H$ with $\chiss{H} = 7$.}
	\label{fig:k23x2}
\end{figure}

There are many other important graph classes for which the semistrong edge-coloring has not been studied specifically yet,
e.g., planar graphs.
In~\cite{FauGyaSchTuz90}, it is proved that for a strong edge coloring of a planar graph $G$ at most $4\Delta(G) + 4$ colors are needed, 
and there are planar graphs $G$ with $\chis{G} = 4\Delta(G) - 4$ for any $\Delta(G) \ge 2$.
Note that these examples have $\chiss{G} \le 2\Delta(G)$. But as we showed with $K_{3,3}$ and the $5$-prism, for cubic graphs, we need $9$ and $8$ colors.
So, one may ask, what is happening when the maximum degree is larger.
We propose the following.
\begin{problem}
	For a given maximum degree, determine the tight upper bound for the semistrong chromatic index of planar graphs.
\end{problem}
\begin{conjecture}
	There is a (small) constant $C$ such that for any planar graph $G$, it holds that
	$$
		\chiss{G} \le 2\Delta(G) + C\,.
	$$
\end{conjecture}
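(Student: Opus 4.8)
The plan is to run a minimal-counterexample and discharging argument in the spirit of the proof of \cref{thm:cubic}, lifting its potential-function bookkeeping to arbitrary maximum degree. Let $G$ be an edge-minimal plane counterexample, so that $G$ is connected, planar, $\chiss{G} > 2\Delta(G) + C$, while every proper subgraph admits a semistrong edge-coloring with at most $2\Delta(G)+C$ colors. Small maximum degrees (in particular $\Delta(G)=3$, where the $5$-prism already forces $C \ge 2$) would be treated separately, by hand or by computer, so I would assume $\Delta = \Delta(G)$ to be large. The aim is to exhibit a list of \emph{reducible configurations} that cannot occur in $G$ and then to contradict Euler's formula by discharging.

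The reducibility step follows the template used repeatedly above: delete a carefully chosen set of vertices or edges, invoke minimality to color the smaller graph, and extend via \cref{thm:hall}. For an uncolored edge $e=xy$ drawn from a palette of size $2\Delta+C$, adjacency forbids at most $d(x)+d(y)-2 \le 2\Delta-2$ colors, and the edges opposite $e$ in a common $4$-cycle forbid at most $\ell(e)=|L(e)|$ further colors, leaving a surplus of roughly $C+2-\ell(e)$. The central obstacle is that $\ell(e)$ is \emph{not} bounded by a constant in general planar graphs: a ``ladder'' of $4$-cycles sharing $e$ makes $\ell(e)$ as large as $\Delta-1$, so naive Hall counting fails precisely when $e$ lies in many $4$-cycles. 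The way out should be structural, since an edge with large $\ell(e)$ forces many low-degree vertices in its immediate neighborhood, and such light vertices are themselves reducible. I would therefore first establish reducibility of the light configurations (degree-$1$ and degree-$2$ vertices, short separating structures, and $4$-cycles carrying low-degree opposite edges), so that in a minimal counterexample every edge lying in many $4$-cycles is surrounded by high-degree, hence non-reducible, vertices.

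With the reducible configurations ruled out, I would set up discharging on the plane graph $G$: assign each vertex $v$ the charge $d(v)-4$ and each face $f$ the charge $d(f)-4$, so that by Euler's formula the total charge is $-8<0$. Designing local rules that redistribute charge from high-degree vertices and long faces toward the forbidden light structures, the goal is to show every vertex and face ends with nonnegative charge, contradicting the negative total. The constant $C$ would be read off from the worst surplus demanded in the reducibility analysis; the $5$-prism and the graph $H$ of \cref{fig:k23x2} suggest targeting $C$ near $2$ (at least large enough to absorb the $(0,1)$-relaxed and the $P_6$ fixups). For the edges where the Hall surplus is exactly critical, I expect to need the two-level potential argument that closes the subcubic proof: among all proper colorings minimize first the number of bad middle edges ($\iota_6$) and then the number of bad distance-$2$ pairs ($\iota_4$), recoloring along induced paths as in \cref{cl:sub_2} to drive these potentials down.

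The hard part will be controlling the distance-$2$ interactions during reduction. Unlike ordinary proper coloring, deleting a vertex alters the $4$-cycle and $P_6$ structure nonlocally, and re-extending a coloring must simultaneously avoid creating bad $4$-cycles and bad middle edges; this is exactly the delicate $\iota_6/\iota_4$ accounting of \cref{thm:cubic}, now with unbounded degree and unbounded face sizes. Making the available-color surplus at least a fixed constant independent of $\Delta$ — so that only an \emph{additive} $C$, rather than a multiplicative factor, is lost — is where planarity must enter through the sparsity $|E(G)| \le 3|V(G)|-6$, and I expect this balance between the $\approx 2\Delta$ adjacency constraints and constant-bounded distance-$2$ damage to be the crux of the whole argument. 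A probabilistic Lovász-Local-Lemma approach, or an arboricity-based decomposition (planar graphs split into three forests, each of semistrong index at most $\Delta+1$), would confirm a linear bound $c\,\Delta$ but with $c>2$, and so would serve only as a sanity check rather than a route to the tight additive form.
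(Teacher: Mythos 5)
This statement is one of the paper's open conjectures (Section~\ref{sec:con}); the paper gives no proof of it, so there is nothing for your proposal to match, and what you have written is in any case a programme rather than a proof. More importantly, the programme has a step that fails as stated: the claimed reducibility of light configurations. Consider $K_{2,n}$, which is planar: \emph{every} vertex of the large side has degree $2$, every edge $e$ satisfies $\ell(e)=\Delta-1$, and by equation~\eqref{eq:bip} we have $\chiss{K_{2,n}}=2n=2\Delta(G)$ exactly. So a $2$-vertex is \emph{not} a reducible configuration for the bound $2\Delta(G)+C$: deleting one and re-extending, each uncolored edge faces up to $2\Delta-2$ adjacency constraints plus up to $\Delta-1$ rainbow-$4$-cycle constraints, i.e.\ roughly $3\Delta$ forbidden colors against a palette of size $2\Delta+C$, and no local Hall-type surplus exists. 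The tightness of $2\Delta$ on $K_{2,n}$ is forced globally (all $\binom{n}{1}$ $4$-cycles through a fixed edge must be rainbow), which shows that any proof of the conjecture must handle dense $4$-cycle bundles by a global counting or structural argument, not by the local delete-and-extend template of \cref{thm:cubic}. Your own text identifies large $\ell(e)$ as the obstacle but then asserts it is resolved by reducing the surrounding low-degree vertices --- precisely the move that $K_{2,n}$ refutes.

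Two further points. First, your discharging setup (charges $d(v)-4$, $d(f)-4$, total $-8$) is stated without any rules, and since $K_{2,n}$ contains no configuration you could legitimately exclude, no rule set can succeed with your configuration list; the skeleton is empty where the work would be. Second, your ``sanity check'' via arboricity is also unsound: semistrong matchings are not preserved under passing from a forest $F \subseteq G$ to $G$, because the degree-one condition is imposed in $G[V(M)]$, which contains edges of the other forests. So decomposing a planar graph into three forests and coloring each with $\Delta+1$ colors does not yield a semistrong edge-coloring of $G$ at all, not even a bound $c\,\Delta$ with $c>2$. The conjecture remains open, and your proposal, while correctly oriented toward the tension between the $\approx 2\Delta$ adjacency constraints and the $4$-cycle constraints, does not contain an idea that overcomes it.
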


\smallskip
Finally, let us briefly mention another edge-coloring variation, 
which is closely related to the semistrong edge-coloring, yet different.
Baste and Rautenbach~\cite{BasRau18}, motivated by the results of Goddard et al.~\cite{GodHedHedLas05},
introduced the {\em $r$-chromatic index $\chi_r'(G)$} as the minimum number of $r$-degenerate matchings into which
the edge set of a graph $G$ can be decomposed. 
An {\em $r$-degenerate matching} is a matching $M$ such that the induced graph $G[V(M)]$ is $r$-degenerate.
Clearly,
$$
	\chi'(G) \le \chi_r'(G) \le \chis{G}\,.
$$

Since semistrong matchings are not necessarily $1$-degenerate 
and $1$-degenerate matching are not necessarily semistrong,
there is no direct correspondence between the two edge-colorings.
In particular,
for the $5$-path $P_6$, we have $\chi_1'(P_6) = 2 < \chiss{P_6} = 3$,
and for the graph $H$ being a triangle with a pending edge incident with every vertex,
we have $\chiss{H} = 4 < \chi_1'(H) = 5$.

\paragraph{Acknowledgment.} 
B.~Lu\v{z}ar was partially supported by the Slovenian Research Agency Program P1--0383, and Projects J1--1692 and J1--3002.
M.~Mockov\v{c}iakov\'{a} acknowledged the financial support from the project GA20--09525S of the Czech Science Foundation.
R.~Sot\'{a}k was supported by APVV--19--0153 and VEGA 1/0574/21.

\paragraph{Final remark.} 
Before we finished preparation of this draft, Martina passed away, taken by a treacherous disease.
We would like to dedicate this work to her memory.

\bibliographystyle{plain}
\bibliography{mainBib}

\end{document}